\theoremstyle{plain}
\newtheorem{thm}{Theorem}[section]
\newtheorem{pro}[thm]{Proposition}
\newtheorem{lem}[thm]{Lemma}
\newtheorem{cor}[thm]{Corollary}
\newtheorem{theoalph}{Theorem}
\theoremstyle{definition}
\newtheorem{eg}[thm]{Example}
\newtheorem{rem}[thm]{Remark}
\newtheorem*{remalph}{Remark}
\newtheorem*{egsalph}{Examples}
\def\og{\leavevmode\raise.3ex\hbox{$\scriptscriptstyle\langle\!\langle$~}}
\def\fg{\leavevmode\raise.3ex\hbox{~$\!\scriptscriptstyle\,\rangle\!\rangle$}}
\begin{document}

\selectlanguage{english}

\subjclass{14E07, 14E05}

\title[On solvable subgroups of the Cremona group]{On solvable subgroups of \\
the Cremona group}

\author{Julie \textsc{D\'eserti}}
\address{Institut de Math\'ematiques de Jussieu-Paris Rive gauche, UMR $7586$, Universit\'e 
Paris $7$, B\^atiment Sophie Germain, Case $7012$, $75205$ Paris Cedex 13, France.}

\email{deserti@math.univ-paris-diderot.fr}

\thanks{Author partially supported by ANR Grant "BirPol"  ANR-11-JS01-004-01.}

\begin{abstract}
The Cremona group $\mathrm{Bir}(\mathbb{P}^2_\mathbb{C})$ is the group of birational self-maps of $\mathbb{P}^2_\mathbb{C}$. Using the action of $\mathrm{Bir}(\mathbb{P}^2_\mathbb{C})$ on the Picard-Manin space of $\mathbb{P}^2_\mathbb{C}$ we characterize its solvable subgroups. If $\mathrm{G}\subset\mathrm{Bir}(\mathbb{P}^2_\mathbb{C})$ is solvable, non virtually abelian, and infinite, then up to finite index: either any element of $\mathrm{G}$ is of finite order or conjugate to an automorphism of $\mathbb{P}^2_\mathbb{C}$, or $\mathrm{G}$ preserves a unique fibration that is rational or elliptic, or $\mathrm{G}$ is, up to conjugacy, a subgroup of the group generated by one hyperbolic monomial map and the diagonal automorphisms. 

We also give some corollaries.
\end{abstract}

\maketitle

\section{Introduction}\label{sec:intro}

We know properties on finite subgroups (\cite{DolgachevIskovskikh}), finitely generated subgroups (\cite{Cantat:tits}), uncountable maximal abelian subgroups (\cite{Deserti:autbir}), nilpotent subgroups (\cite{Deserti:nilpotent}) of the Cremona group. In this article we focus on solvable subgroups of the Cremona group.

\medskip

Let $\mathrm{G}$ be a group. Recall that $[g,h]=ghg^{-1}h^{-1}$ denotes the commutator of $g$ and $h$. If $\Gamma_1$ and $\Gamma_2$ are two subgroups of $\mathrm{G}$, then $[\Gamma_1,\Gamma_2]$ is the subgroup of $\mathrm{G}$ generated by the elements of the form $[g,h]$ with $g\in \Gamma_1$ and $h\in \Gamma_2$. We define the \textbf{\textit{derived series}} of $\mathrm{G}$ by setting
\[
\mathrm{G}^{(0)}=\mathrm{G}\qquad\qquad \mathrm{G}^{(n+1)}=[\mathrm{G}^{(n)},\mathrm{G}^{(n)}] \quad\forall\,n\geq 0.
\]
The \textbf{\textit{soluble length}} $\ell(\mathrm{G})$ of $\mathrm{G}$ is defined by  
\[
\ell(\mathrm{G})=\min\big\{k\in\mathbb{N}\cup\{0\}\,\vert\, \mathrm{G}^{(k)}=\{\mathrm{id}\}\big\}
\]
with the convention: $\min \emptyset=\infty$. We say that $\mathrm{G}$ is \textbf{\textit{solvable}} if $\ell(\mathrm{G})<\infty$. The study of solvable groups started a long time ago, and any linear solvable subgroup is up to finite index triangularizable (Lie-Kolchin theorem, \cite[Theorem 21.1.5]{KargapolovMerzljakov}). The assumption "up to finite index" is essential: for instance the subgroup of $\mathrm{PGL}(2,\mathbb{C})$ generated by $\left[\begin{array}{cc}
1 & 0\\
1 & -1
\end{array}\right]$ and
$\left[\begin{array}{cc}
-1 & 1\\
0 & 1 
\end{array}\right]$ is isomorphic to $\mathfrak{S}_3$ so is solvable but is not triangularizable.

\medskip

\begin{theoalph}\label{mainthm}
{\sl Let $\mathrm{G}$ be an infinite, solvable, non virtually abelian subgroup of $\mathrm{Bir}(\mathbb{P}^2_\mathbb{C})$. Then, up to finite index, one of the following holds:
\smallskip
\begin{enumerate}
\item any element of $\mathrm{G}$ is either of finite order, or conjugate to an automorphism of $\mathbb{P}^2_\mathbb{C}$;
\smallskip
\item $\mathrm{G}$ preserves a unique fibration that is rational, in particular\, $\mathrm{G}$ is, up to conjugacy, a subgroup of\,~$\mathrm{PGL}(2,\mathbb{C}(y))~\rtimes~\mathrm{PGL}(2,\mathbb{C})$;
\smallskip
\item $\mathrm{G}$ preserves a unique fibration that is elliptic;
\smallskip
\item $\mathrm{G}$ is, up to birational conjugacy, contained in the group generated by 
\[
\big\{ (x^py^q,x^ry^s),\,(\alpha x,\beta y)\,\vert\, \alpha,\,\beta\in\mathbb{C}^*\big\}
\]
where $M=\left[\begin{array}{cc}
p & q\\
r & s
\end{array}\right]$ denotes an element of $\mathrm{GL}(2,\mathbb{Z})$ with spectral radius $>1$. The group $\mathrm{G}$ preserves the two holomorphic foliations defined by the $1$-forms $\alpha_1\,x\mathrm{d}y+\beta_1\,y\mathrm{d}x$ and $\alpha_2\,x\mathrm{d}y+\beta_2\,y\mathrm{d}x$ where $(\alpha_1,\beta_1)$ and $(\alpha_2,\beta_2)$ denote the eigenvectors of\, ${}^{t}M$.
\end{enumerate}
\smallskip
Furthermore if $\mathrm{G}$ is uncountable, case 3. does not hold.}
\end{theoalph}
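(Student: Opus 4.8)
The plan is to study $\mathrm{G}$ through its isometric action on the infinite-dimensional hyperbolic space $\mathbb{H}_\infty$ attached to the Picard–Manin space of $\mathbb{P}^2_\mathbb{C}$, together with the classification of elements of $\mathrm{Bir}(\mathbb{P}^2_\mathbb{C})$ into elliptic, parabolic and loxodromic (according to the growth of $\deg(f^n)$, equivalently the type of the isometry $f_*$ of $\mathbb{H}_\infty$). First I would observe that a solvable group is amenable, hence contains no non-abelian free subgroup; by the ping-pong argument for groups acting on a Gromov hyperbolic space --- in the form used by Cantat for $\mathrm{Bir}(\mathbb{P}^2_\mathbb{C})$ --- the action of $\mathrm{G}$ on $\mathbb{H}_\infty$ is therefore not of general type, i.e.\ $\mathrm{G}$ is \emph{elementary}: either (i) $\mathrm{G}$ has a bounded orbit in $\mathbb{H}_\infty$, or (ii) $\mathrm{G}$ fixes a unique point of $\partial\mathbb{H}_\infty$, or (iii) $\mathrm{G}$ fixes a pair of points of $\partial\mathbb{H}_\infty$, i.e.\ preserves a geodesic line. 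I would then treat these three cases.

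In case (i) a circumcenter argument in the completion of $\mathbb{H}_\infty$ shows that $\mathrm{G}$ fixes a point $x$ with $\langle x,x\rangle>0$, so $\mathrm{G}$ is \emph{regularizable}: up to birational conjugacy $\mathrm{G}\subset\mathrm{Aut}(\mathrm{X})$ for a smooth projective rational surface $\mathrm{X}$, fixing the class of a big and nef divisor. The image of $\mathrm{G}$ in $\mathrm{GL}(\mathrm{NS}(\mathrm{X}))$ fixes a vector of positive self-intersection lying in the positive cone, hence is finite, so up to finite index $\mathrm{G}$ lies in the connected linear algebraic group $\mathrm{H}=\mathrm{Aut}(\mathrm{X})^{\circ}$; since $\mathrm{G}$ is infinite, $\mathrm{H}$ is non-trivial. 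The structure theory of rational surfaces with a non-trivial connected automorphism group then shows that either $\mathrm{G}$ preserves a fibration --- necessarily rational, since a linear algebraic group cannot act with elliptic generic orbit, giving case 2 --- or $\mathrm{X}\cong\mathbb{P}^2_\mathbb{C}$ up to the relevant equivariant blow-ups and $\mathrm{G}$ is conjugate into $\mathrm{PGL}(3,\mathbb{C})=\mathrm{Aut}(\mathbb{P}^2_\mathbb{C})$, giving case 1.

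In case (ii) I would first exclude loxodromic elements: two loxodromic isometries sharing exactly one endpoint generate a non-elementary group, so a loxodromic element of $\mathrm{G}$ would force us back into case (iii). If $\mathrm{G}$ contains a parabolic element $g$, then Gizatullin's theorem (in the form of Blanc–Cantat and Diller–Favre) provides a unique $g$-invariant fibration $\mathcal{F}$, rational or elliptic, whose fibre class spans the fixed boundary point of $g$, hence of all of $\mathrm{G}$; working in $\mathrm{NS}(\mathrm{X})=\mathrm{Pic}(\mathrm{X})$ one checks that every element of $\mathrm{G}$ preserves $\mathcal{F}$, and uniqueness is automatic because $\mathrm{G}$ fixes a unique boundary point --- this yields case 2 or case 3. (If $\mathrm{G}$ has neither a loxodromic nor a parabolic element it is all-elliptic with unbounded orbit, which is handled as in Cantat's analysis and returns to case (i).) For the last sentence of the theorem: the group of birational transformations preserving an elliptic fibration over a rational base is countable, since it maps onto a finite group of base automorphisms preserving the discriminant, with kernel an extension of the finite group of automorphisms of the generic fibre by its Mordell–Weil group, which is finitely generated by Lang–Néron; hence an uncountable $\mathrm{G}$ cannot lie in case 3.

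Case (iii) is where I expect the main work. Here $\mathrm{G}$ preserves the axis of some loxodromic $f$ and, up to index two, acts on it by translations via a homomorphism $\tau\colon\mathrm{G}\to\mathbb{R}$ with $\tau(f)\neq0$; its kernel $\mathrm{N}$ fixes the axis pointwise, hence consists of elliptic elements and is regularizable, and is normalized by $f$. If $\mathrm{N}$ were finite then $\mathrm{G}$ would be virtually abelian, contrary to hypothesis, so $\mathrm{N}$ is infinite. The crucial step --- for which I would invoke the classification of loxodromic birational maps whose normalizer contains an infinite group of elliptic maps --- is that $f$ is birationally conjugate to a monomial map $\mathrm{M}\in\mathrm{GL}(2,\mathbb{Z})$ of spectral radius $>1$; then the infinite elliptic group $\mathrm{N}$ it normalizes must, up to finite index and conjugacy, lie in the diagonal torus $\{(\alpha x,\beta y)\}$, every element of $\mathrm{G}\setminus\mathrm{N}$ is loxodromic with the same axis and is therefore itself monomial, so $\tau(\mathrm{G})$ is discrete, and $\mathrm{G}$ is, up to finite index and birational conjugacy, contained in $\langle\mathrm{M}\rangle\ltimes(\mathbb{C}^*)^2$ --- case 4 --- the two invariant foliations being read off from the eigenvectors of $\transp{\mathrm{M}}$ by a direct computation. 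The main obstacle is precisely this monomial identification together with the pinning down of $\mathrm{N}$; the remaining difficulties are bookkeeping, namely verifying that passing to finite-index subgroups preserves each conclusion (notably the uniqueness of the fibration in cases 2 and 3) and checking the degenerate overlaps between the four cases.
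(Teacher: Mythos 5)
Your overall architecture coincides with the paper's (split according to whether $\mathrm{G}$ contains a hyperbolic map, a twist, or only elliptic maps; force an invariant axis or boundary point by a Tits/ping-pong argument; use the Delzant--Py classification of hyperbolic maps normalizing an infinite elliptic group to land in the monomial case; countability of groups preserving an elliptic fibration for the last assertion), but two steps are genuinely wrong as written. The first is your treatment of the all-elliptic case: you assert that a subgroup all of whose elements are elliptic but whose orbits in $\mathbb{H}_{\mathbb{P}^2_\mathbb{C}}$ are unbounded ``returns to case (i)'' (bounded orbit, hence regularizable). This is false, and the paper's own Remark supplies counterexamples: the group $\mathrm{E}=\big\{(\alpha x+P(y),\beta y+\gamma)\big\}$ and Wright's torsion groups consist entirely of elliptic elements, yet have unbounded degrees, hence unbounded orbits, and are not conjugate into $\mathrm{Aut}(M)$ for any projective surface $M$. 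This is exactly why case 1 of the theorem is stated element-wise (``any element of $\mathrm{G}$ is of finite order or conjugate to an automorphism of $\mathbb{P}^2_\mathbb{C}$'') rather than as a statement about $\mathrm{G}$ itself; that weaker conclusion follows directly from the Diller--Favre classification of a single elliptic element and needs no circumcenter or regularization. Your case (i) also overclaims in the bounded situation (conjugating $\mathrm{G}$ into $\mathrm{PGL}(3,\mathbb{C})$ when no fibration is preserved), which is neither needed for nor asserted by the theorem.

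The second gap is your exclusion of the focal configuration: you claim that two loxodromic isometries sharing exactly one boundary point generate a non-elementary group. As a statement about isometries of a hyperbolic space this is false: in $\mathrm{PSL}(2,\mathbb{R})$ the maps $z\mapsto 2z$ and $z\mapsto 2z+1$ are hyperbolic, share only the fixed point $\infty$, and generate a metabelian group. So your reduction to the three elementary cases (i)--(iii) does not, as written, rule out a focal action. What is actually needed is a statement specific to $\mathrm{Bir}(\mathbb{P}^2_\mathbb{C})$ --- the paper's Lemma \ref{lem:pas2hyp} (a solvable non-abelian subgroup cannot contain two hyperbolic maps with distinct fixed-point pairs), which rests on Cantat's Theorem 6.4 for the Cremona group, not on formal hyperbolic geometry. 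Both gaps are repairable by substituting the correct Cremona-specific inputs, and the remaining steps (the parabolic case forcing a $\mathrm{G}$-invariant fibration via the construction of two hyperbolic maps with distinct axes, Delzant--Py in the hyperbolic case, and your Lang--N\'eron argument in place of the paper's Halphen-surface argument for countability) match the paper or are acceptable variants.
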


\begin{egsalph}\label{exemples}
\begin{itemize}
\item[$\bullet$] Denote by $\mathrm{S}_3$ the group generated by $\left[\begin{array}{cc}
1 & 0\\
1 & -1
\end{array}\right]$ and
$\left[\begin{array}{cc}
-1 & 1\\
0 & 1 
\end{array}\right]$. As we recall before $\mathrm{S}_3\simeq\mathfrak{S}_3$. Consider now the subgroup $\mathrm{G}$ of $\mathrm{Bir}(\mathbb{P}^2_\mathbb{C})$ whose elements are the monomial maps $(x^py^q,x^ry^s)$ with $\left[
\begin{array}{cc}
p & q \\
r & s
\end{array}
\right]\in\mathrm{S}_3$. Then any element of $\mathrm{G}$ has finite order, and $\mathrm{G}$ is solvable; it gives an example of case~1. 

\smallskip

\item[$\bullet$] Other examples that illustrate case 1. are the following groups 
\[
\big\{(\alpha x+\beta y+\gamma,\delta y+\varepsilon)\,\vert\,\alpha,\,\delta\in\mathbb{C}^*,\,\beta,\,\gamma,\,\varepsilon\in\mathbb{C}\big\}\subset\mathrm{Aut}(\mathbb{P}^2_\mathbb{C}),
\]
and
\[
\mathrm{E}=\big\{(\alpha x+P(y),\beta y+\gamma)\,\vert\, \alpha,\,\beta\in\mathbb{C}^*,\,\gamma\in\mathbb{C},\, P\in\mathbb{C}[y]\big\}\subset\mathrm{Aut}(\mathbb{C}^2).
\]

\item[$\bullet$] The centralizer of a birational map of $\mathbb{P}^2_\mathbb{C}$ that preserves a unique fibration that is rational is virtually solvable (\cite[Corollary C]{CerveauDeserti:centralisateurs}); this example falls in case 2 (\emph{see} \S\ref{subsec:parabolic}).

\item[$\bullet$] In \cite[Proposition 2.2]{Cornulier} Cornulier proved that the group
\[
\langle (x+1,y),\,(x,y+1),\,(x,xy)\rangle
\]
is solvable of length $3$, and is not linear over any field; this example falls in case 2. The invariant fibration is given by $x=$ cst.
\end{itemize}
\end{egsalph}

\begin{remalph}
In case $1.$ if there exists an integer $d$ such that $\deg\phi\leq d$ for any $\phi$ in $\mathrm{G}$, then there exist a smooth projective variety $M$ and a birational map $\psi\colon M\dashrightarrow\mathbb{P}^2_\mathbb{C}$ such that $\psi^{-1}\mathrm{G}\psi$ is a solvable subgroup of $\mathrm{Aut}(M)$ (\emph{see} \S\ref{subsec:elliptic}). But there is some solvable subgroups $\mathrm{G}$ with only elliptic elements that do not satisfy this property: the group $\mathrm{E}$ introduced in Examples \ref{exemples}. Let us mention an other example: Wright constructs abelian subgroups $\mathrm{H}$ of $\mathrm{Aut}(\mathbb{C}^2)$ such that any element of $\mathrm{H}$ is of finite order, $\mathrm{H}$ is unbounded and does not preserve any fibration (\cite{Wright}).
\end{remalph}

In \S \ref{sec:proofmainthm} we prove Theorem \ref{mainthm}: we first assume that our solvable, infinite and non virtually abelian subgroup $\mathrm{G}$ contains a hyperbolic map, then that it contains a twist and no hyperbolic map, and finally that all elements of $\mathrm{G}$ are elliptic. In the last section (\S\ref{sec:appl}) we also
\smallskip
\begin{itemize}
\item[$\bullet$] recover the following fact: if $\mathrm{G}$ is an infinite nilpotent subgroup of $\mathrm{Bir}(\mathbb{P}^2_\mathbb{C})$, then $\mathrm{G}$ does not contain a hyperbolic map;

\item[$\bullet$] remark that we can bound the soluble length of a nilpotent subgroup of $\mathrm{Bir}(\mathbb{P}^2_\mathbb{C})$ by the dimension of~$\mathbb{P}^2_\mathbb{C}$ as Epstein and Thurston did in the context of Lie algebras of rational vector fields on a connected complex manifold;
\smallskip
\item[$\bullet$] give a negative answer to the following question of Favre: does any solvable and finitely generated subgroup $\mathrm{G}$ of $\mathrm{Bir}(\mathbb{P}^2_\mathbb{C})$ contain a subgroup of finite index whose commutator subgroup is nilpotent ? if we assume that $[\mathrm{G},\mathrm{G}]$ is not a torsion group;
\smallskip
\item[$\bullet$] give a description of the embeddings of the solvable Baumslag-Solitar groups into the Cremona group.
\end{itemize}

\subsection*{Acknowledgments} I would like to thank D. Cerveau for always having time to listen to me, for his precious advices, and for pointing out \cite{MarteloRibon}. Thanks to S. Cantat for helpful remarks and comments, and also to J. Ribon for the example of \S\ref{sec:favre}. I would like also thank P. Sad for his permanent invitation at IMPA where I found the quietness to put my ideas in order. Thanks to the referee whose comments and suggestions help me to improve the presentation.

\section{Some properties of the birational maps}\label{sec:someptes}

\subsubsection*{First definitions}

Let $\mathcal{S}$ be a projective surface. We will denote by $\mathrm{Bir}(\mathcal{S})$ the group of birational self-maps of $\mathcal{S}$; in the particular case of the complex projective plane the group $\mathrm{Bir}(\mathbb{P}^2_\mathbb{C})$ is called \textbf{\textit{Cremona group}}. Take $\phi$ in $\mathrm{Bir}(\mathcal{S})$, we will denote by $\mathrm{Ind}\,\phi$ the set of points of indeterminacy of $\phi$; the codimension of $\mathrm{Ind}\,\phi$ is $\geq 2$.

A birational map from $\mathbb{P}^2_\mathbb{C}$ into itself can be written 
\[
(x:y:z)\dashrightarrow\big(\phi_0(x,y,z):\phi_1(x,y,z):\phi_2(x,y,z)\big)
\]
where the $\phi_i$'s denote some homogeneous polynomials of the same degree and without common factors of positive degree. The \textbf{\textit{degree}} of $\phi$ is equal to the degree of the $\phi_i$'s. Let $\phi$ be a birational map of $\mathbb{P}^2_\mathbb{C}$. One can define the \textbf{\textit{dynamical degree}} of $\phi$ as
\[
\lambda(\phi)=\lim_{n\to +\infty}(\deg \phi^n)^{1/n}.
\]
More generally let $\mathcal{S}$ be a projective surface, and $\phi\colon \mathcal{S}\dashrightarrow \mathcal{S}$ be a birational map. Take any norm $\vert\vert\cdot\vert\vert$ on the N\'eron-Severi real vector space $\mathrm{N}^1(\mathcal{S})$. If $\phi^*$ is the induced action by $\phi$ on $\mathrm{N}^1(\mathcal{S})$, we can define
\[
\lambda(\phi)=\lim_{n\to +\infty}\vert\vert (\phi^n)^*\vert\vert^{1/n}.
\]
Remark that this quantity is a birational invariant: if $\psi\colon \mathcal{S}\dashrightarrow \mathcal{S}'$ is a birational map, then $\lambda(\psi\phi\psi^{-1})=\lambda(\phi)$. 

\subsubsection*{Classification of birational maps}

The algebraic degree is not a birational invariant, but the first dyna\-mical degree is; more precisely one has a classification of birational maps based on the degree growth. Before stating it, let us first introduce the following definitions. Let $\phi$ be an element of $\mathrm{Bir}(\mathbb{P}^2_\mathbb{C})$. If
\smallskip
\begin{itemize}
\item[$\bullet $] $(\deg\phi^n)_{n\in\mathbb{N}}$ is bounded, we say that $\phi$ is an \textbf{\textit{elliptic map}}, 
\smallskip
\item[$\bullet $] $(\deg\phi^n)_{n\in\mathbb{N}}$ grows linearly, we say that $\phi$ is a \textbf{\textit{Jonqui\`eres twist}}, 
\smallskip
\item[$\bullet $] $(\deg\phi^n)_{n\in\mathbb{N}}$ grows quadratically, we say that $\phi$ is a \textbf{\textit{Halphen twist}}, 
\smallskip
\item[$\bullet $] $(\deg\phi^n)_{n\in\mathbb{N}}$ grows exponentially, we say that $\phi$ is a \textbf{\textit{hyperbolic map}}. 
\end{itemize}

\begin{thm}[\cite{DillerFavre, Gizatullin, BlancDeserti:degreegrowth}]
{\sl Let $\phi$ be an element of $\mathrm{Bir}(\mathbb{P}^2_\mathbb{C})$. Then one and only one of the following cases holds
\smallskip
\begin{itemize}
\item[$\bullet$] $\phi$ is elliptic, furthermore if $\phi$ is of infinite order, then $\phi$ is up to birational conjugacy an automorphism of $\mathbb {P}^2_\mathbb{C}$,
\smallskip
\item[$\bullet$] $\phi$ is a Jonqui\`eres twist, $\phi$ preserves a unique fibration that is rational and every conjugate of $\phi$ is not an automorphism of a projective surface, 
\smallskip
\item[$\bullet$] $\phi$ is a Halphen twist, $\phi$ preserves a unique fibration that is elliptic and $\phi$ is conjugate to an automorphism of a projective surface, 
\smallskip
\item[$\bullet$] $\phi$ is a hyperbolic map. 
\end{itemize}
\smallskip

In the three first cases, $\lambda(\phi)=1$, in the last one $\lambda(\phi)>1$.}
\end{thm}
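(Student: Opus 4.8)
The plan is to reduce the statement to the linear algebra of the action induced by $\phi$ on a finite-dimensional N\'eron--Severi group, and then translate the spectral type of that action back into geometry. First I would invoke the regularization theorem of Diller--Favre: for every $\phi\in\mathrm{Bir}(\mathbb{P}^2_\mathbb{C})$ there is a birational morphism $\pi\colon X\to\mathbb{P}^2_\mathbb{C}$ from a smooth projective (necessarily rational) surface such that $\phi_X=\pi^{-1}\phi\pi$ is \emph{algebraically stable}, i.e. $(\phi_X^{\,n})^*=(\phi_X^{\,*})^n$ on $\mathrm{N}^1(X)$ for all $n$. For such a model $\deg\phi^n$ is comparable to $\|(\phi_X^{\,*})^n\|$, so $\lambda(\phi)=\rho(\phi_X^{\,*})\geq 1$ and the growth type of $(\deg\phi^n)_n$ is a conjugacy invariant; this is what makes the four classes well defined and mutually exclusive.

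Next, the operator $\phi_X^{\,*}$ is an isometry of the intersection lattice on $\mathrm{N}^1(X)$, which has Lorentzian signature $(1,\rho(X)-1)$, and it preserves the salient nef cone. I would then use the classification of isometries of such a lattice, together with the rigidity coming from there being a single positive direction, to obtain a clean trichotomy: either (a) $\rho(\phi_X^{\,*})=1$ and $\phi_X^{\,*}$ is semisimple; or (b) $\rho(\phi_X^{\,*})=1$ and $\phi_X^{\,*}$ is not semisimple, in which case it has exactly one non-trivial Jordan block, necessarily at the eigenvalue $1$, of size $2$ or $3$; or (c) $\rho(\phi_X^{\,*})>1$ (its leading eigenvalue is then a Salem or a Pisot number). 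Evaluating $\langle(\phi_X^{\,*})^n H,H\rangle$ for $H$ ample gives $\deg\phi^n$ respectively bounded, $\asymp n$, $\asymp n^2$, or $\asymp\lambda(\phi)^n$; since these four regimes are pairwise incompatible, exactly one of the four cases occurs, with $\lambda(\phi)=1$ in the first three and $\lambda(\phi)>1$ in the last.

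It remains to promote the linear data into the geometric assertions. In case (b) the $\phi_X^{\,*}$-invariant isotropic ray is nef with self-intersection $0$; on a rational surface such a class is semiample, so a multiple of it is the fibre class of a fibration $X\to C$ preserved by $\phi_X$, and one checks $C\cong\mathbb{P}^1_\mathbb{C}$. Adjunction shows the generic fibre is rational exactly when the Jordan block has size $2$ and elliptic when it has size $3$. If the size is $2$, then $\phi_X$ acts on the base and by a M\"obius transformation on the generic fibre, so any conjugate of $\phi$ that were an automorphism of a projective surface would have bounded degree growth, which is impossible; hence a Jonqui\`eres twist is never conjugate to an automorphism. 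If the size is $3$, one recognizes a Halphen pencil and checks that, after a suitable further blow-up, $\phi_X$ is genuinely an automorphism acting by translation along the elliptic fibres. Uniqueness of the invariant fibration in both cases follows because an isometry fixing two distinct isotropic rays of the nef cone must be elliptic. Finally, in case (a), a map of infinite order has bounded degree, hence is conjugate to an automorphism of a rational surface; contracting $\phi_X$-invariant configurations of $(-1)$-curves lands in a minimal rational surface, and a direct analysis of automorphisms of $\mathbb{P}^2_\mathbb{C}$, of the Hirzebruch surfaces and of $\mathbb{P}^1_\mathbb{C}\times\mathbb{P}^1_\mathbb{C}$ brings it, after one more birational conjugacy, into $\mathrm{Aut}(\mathbb{P}^2_\mathbb{C})$.

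The genuinely hard inputs are the regularization theorem (which is specific to surfaces and already substantial) and, inside the last step, the two points where lattice theory must be combined with surface geometry: the semiampleness of the invariant isotropic class, and the fact that a parabolic isometry admits no Jordan block of size larger than $3$, so that the linear-versus-quadratic alternative is tied precisely to the rational-versus-elliptic nature of the invariant fibration. These are the substance of the cited work of Diller--Favre, Gizatullin and Blanc--D\'eserti, and they do not follow formally from the set-up.
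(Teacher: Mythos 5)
First, a remark on scope: the paper does not prove this theorem; it is quoted from Diller--Favre, Gizatullin and Blanc--D\'eserti and used as background, so there is no internal proof to compare against. Judged on its own terms, your sketch has the standard architecture (regularization to an algebraically stable model, spectral analysis of the induced action, production of the invariant fibration from a nef isotropic eigenvector, Gizatullin for the elliptic case, semiampleness of the invariant class as the geometric input).

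There is, however, a genuine error at the pivotal step. The operator $\phi_X^{\,*}$ is \emph{not} an isometry of the intersection form on $\mathrm{N}^1(X)$ when $\phi_X$ is merely birational, even on an algebraically stable model: the push--pull formula gives $\phi_X^{\,*}\alpha\cdot\phi_X^{\,*}\beta=\alpha\cdot\beta+(\text{a correction supported on the curves contracted by }\phi_X)$, and this correction is nonzero in exactly the case you most need. (Concretely, for a Jonqui\`eres twist $(\phi_X^{n*}H)\cdot(\phi_X^{n*}H)$ grows with $n$, so it cannot be constant.) Worse, the dichotomy you build on top of the isometry claim --- a lone Jordan block of size $2$ at eigenvalue $1$ giving linear growth and a rational fibre, versus size $3$ giving quadratic growth and an elliptic fibre --- is impossible for a genuine isometry: in an orthogonal group the Jordan blocks of even size at eigenvalue $1$ occur in pairs, and the Lorentzian signature allows at most one nontrivial unipotent block, which then has size exactly $3$. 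This parity obstruction is precisely \emph{why} automorphisms of projective surfaces never have linear degree growth, i.e. why a Jonqui\`eres twist is never conjugate to an automorphism --- a statement your framework is supposed to deliver but in fact contradicts. The linear-growth case therefore requires a different mechanism: Diller--Favre obtain the invariant nef isotropic class and the fibration from algebraic stability and a Perron--Frobenius argument on the nef cone (no isometry property is used), and the separation between linear and quadratic growth is tied to whether the fixed isotropic class can be realized in $\mathrm{N}^1$ of some model (equivalently, whether $\phi$ is regularizable), with Gizatullin's theorem handling the quadratic/elliptic case; alternatively one works in the Picard--Manin space, where the action genuinely is by isometries but the finite-dimensional Jordan analysis must be replaced. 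Your closing paragraph rightly identifies these as the hard inputs, but the specific reduction you propose would not produce them.
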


\subsubsection*{The Picard-Manin and bubble spaces}

Let $\mathcal{S}$, and $\mathcal{S}_i$ be complex projective surfaces. If $\pi\colon \mathcal{S}_1\to \mathcal{S}$ is a birational morphism, one gets $\pi^*\colon\mathrm{N}^1(\mathcal{S})\to\mathrm{N}^1(\mathcal{S}_1)$ an embedding of N\'eron-Severi groups. Take two birational morphisms $\pi_1\colon \mathcal{S}_1\to \mathcal{S}$ and $\pi_2\colon \mathcal{S}_2\to \mathcal{S};$ the morphism $\pi_2$ is \textbf{\textit{above}} $\pi_1$ if $\pi_1^{-1}\pi_2$ is regular. Starting with two birational morphisms one can always find a third one that covers the two first. Therefore the inductive limit of all groups $\mathrm{N}^1(\mathcal{S}_i)$ for all surfaces $\mathcal{S}_i$ above $\mathcal{S}$ is well-defined; it is the \textbf{\textit{Picard-Manin space}} $\mathcal{Z}_\mathcal{S}$ of $\mathcal{S}$. For any birational map $\pi$, $\pi^*$ preserves the intersection form and maps nef classes to nef classes hence the limit space $\mathcal{Z}_\mathcal{S}$ is endowed with an intersection form of signature $(1,\infty)$ and a nef cone.

\smallskip

Let $\mathcal{S}$ be a complex projective surface. Consider all complex and projective surfaces $\mathcal{S}_i$ above~$\mathcal{S}$, that is all birational morphisms $\pi_i\colon\mathcal{S}_i\to\mathcal{S}$. If $p$ (resp. $q$) is a point of a complex projective surface $\mathcal{S}_1$ (resp.~$\mathcal{S}_2$), and if $\pi_1\colon\mathcal{S}_1\to\mathcal{S}$ (resp. $\pi_2\colon\mathcal{S}_2\to\mathcal{S}$) is a birational morphism, then $p$ is identified with $q$ if~$\pi_1^{-1}\pi_2$ is a local isomorphism in a neighborhood of $q$ that maps $q$ onto $p$. The \textbf{\textit{bubble space}} $\mathcal{B}(\mathcal{S})$ is the union of all points of all surfaces above $\mathcal{S}$ modulo the equivalence relation induced by this identification. If $p$ belongs to $\mathcal{B}(\mathcal{S})$ represented by a point $p$ on a surface $\mathcal{S}_i\to\mathcal{S}$, denote by $E_p$ the exceptional divisor of the blow-up of $p$ and by $e_p$ its divisor class viewed as a point in $\mathcal{Z}_\mathcal{S}$. The following properties are satisfied 
\[
\left\{
\begin{array}{ll}
e_p\cdot e_q=0 \text{ if }p\not=q\\
e_p\cdot e_p=-1
\end{array}
\right.
\]

\subsubsection*{Hyperbolic space $\mathbb{H}_\mathcal{S}$}

Embed $\mathrm{N}^1(\mathcal{S})$ as a subgroup of $\mathcal{Z}_\mathcal{S}$; this finite dimensional lattice is orthogonal to $e_p$ for any $p\in\mathcal{B}(\mathcal{S})$, and 
\[
\mathcal{Z}_\mathcal{S}=\big\{ D+\sum_{p\in\mathcal{B}(\mathcal{S})} a_pe_p\,\vert\, D\in\mathrm{N}^1(\mathcal{S}),\,a_p\in\mathbb{R}\big\}.
\]
The \textbf{\textit{completed Picard-Manin space}} $\overline{\mathcal{Z}}_\mathcal{S}$ of $\mathcal{S}$ is the $L^2$-completion of~$\mathcal{Z}_\mathcal{S}$; in other words
\[
\overline{\mathcal{Z}}_\mathcal{S}=\big\{D+\sum_{p\in\mathcal{B}(\mathcal{S})} a_pe_p\,\vert\, D\in\mathrm{N}^1(\mathcal{S}),\,a_p\in\mathbb{R},\,\sum a_p^2<+\infty\big\}.
\]
The intersection form extends as an intersection form with signature $(1,\infty)$ on $\overline{\mathcal{Z}}_\mathcal{S}$. Let 
\[
\overline{\mathcal{Z}}_{\mathcal{S}}^+=\big\{d\in\overline{\mathcal{Z}}_{\mathcal{S}}\,\vert\, d\cdot c\geq 0 \quad\forall\,c\in \overline{\mathcal{Z}}_{\mathcal{S}}\}
\]
be the nef cone of~$\overline{\mathcal{Z}}_\mathcal{S}$ and
\[
\mathcal{L}\overline{\mathcal{Z}}_{\mathcal{S}}=\big\{d\in\overline{\mathcal{Z}}_{\mathcal{S}}\,\vert\, d\cdot d=0\big\}
\]
be the light cone of $\overline{\mathcal{Z}}_{\mathcal{S}}$.

The \textbf{\textit{hyperbolic space}} $\mathbb{H}_\mathcal{S}$ of $\mathcal{S}$ is then defined by 
\[
\mathbb{H}_\mathcal{S}=\big\{d\in\overline{\mathcal{Z}}_\mathcal{S}^+\,\vert\, d\cdot d=1\big\}.
\] 
Let us remark that $\mathbb{H}_\mathcal{S}$ is an infinite dimensional analogue of the classical hyperbolic space $\mathbb{H}^n$. The \textbf{\textit{distance}} on $\mathbb{H}_\mathcal{S}$ is defined by 
\[
\cosh (\mathrm{dist}(d,d'))=d\cdot d' \qquad\qquad\qquad \forall\,d,\,d'\in\mathbb{H}_\mathcal{S}.
\]
The \textbf{\textit{geodesics}} are intersections of $\mathbb{H}_\mathcal{S}$ with planes. The projection of $\mathbb{H}_\mathcal{S}$ onto $\mathbb{P}(\overline{\mathcal{Z}}_\mathcal{S})$ is one-to-one, and the boundary of its image is the projection of the cone of isotropic vectors of $\overline{\mathcal{Z}}_\mathcal{S}$. Hence 
\[
\partial\mathbb{H}_\mathcal{S}=\big\{\mathbb{R}_+d\,\vert\, d\in\overline{\mathcal{Z}}_\mathcal{S}^+,\, d\cdot d=0\big\}.
\]

\subsubsection*{Isometries of $\mathbb{H}_\mathcal{S}$}

If $\pi\colon\mathcal{S}'\to\mathcal{S}$ is a birational morphism, we get a canonical isometry $\pi^*$ (and not only an embedding) between $\mathbb{H}_{\mathcal{S}}$ and $\mathbb{H}_{\mathcal{S}'}$. This allows to define an action of $\mathrm{Bir}(\mathcal{S})$ on $\mathbb{H}_\mathcal{S}$. Consider a birational map $\phi$ on a complex projective surface $\mathcal{S}$. There exists a surface~$\mathcal{S}'$, and $\pi_1\colon\mathcal{S}'\to\mathcal{S}$, $\pi_2\colon\mathcal{S}'\to\mathcal{S}$ two morphisms such that $\phi=\pi_2\pi_1^{-1}$. One can define $\phi_\bullet$ by
\[
\phi_\bullet=(\pi_2^*)^{-1}\pi_1^*; 
\]
in fact, one gets a faithful representation of $\mathrm{Bir}(\mathcal{S})$ into the group of isometries of $\mathbb{H}_\mathcal{S}$ (\emph{see} \cite{Cantat:tits}).

The isometries of $\mathbb{H}_S$ are classified in three types (\cite{BridsonHaefliger, GhysdelaHarpe}). The \textbf{translation length} of an isometry $\phi_\bullet$ of~$\mathbb{H}_S$ is defined by
\[
L(\phi_\bullet)=\inf\big\{\mathrm{dist}(p,\phi_\bullet(p))\,\vert\, p\in\mathbb{H}_S\big\}
\]
If the infimum is a minimum, then
\begin{itemize}
\item either it is equal to $0$ and $\phi_\bullet$ has a fixed point in $\mathbb{H}_S$, $\phi_\bullet$ is thus \textbf{elliptic},

\item or it is positive and $\phi_\bullet$ is \textbf{hyperbolic}. Hence the set of points $p\in\mathbb{H}_S$ such that $\mathrm{dist}(p,\phi_\bullet(p))$ is equal to $L(\phi_\bullet)$ is a geodesic line $\mathrm{Ax}(\phi_\bullet)\subset\mathbb{H}_S$. Its boundary points are represented by isotropic vectors $\omega(\phi_\bullet)$ and $\alpha(\phi_\bullet)$ in $\overline{\mathcal{Z}}_S$ such that
\[
\phi_\bullet(\omega(\phi_\bullet))=\lambda(\phi)\,\omega(\phi_\bullet)\qquad\qquad\phi_\bullet(\alpha(\phi_\bullet))=\frac{1}{\lambda(\phi)}\alpha(\phi_\bullet)
\]
The axis $\mathrm{Ax}(\phi_\bullet)$ of $\phi_\bullet$ is the intersection of $\mathbb{H}_S$ with the plane containing $\omega(\phi_\bullet)$ and $\alpha(\phi_\bullet)$; furthermore $\phi_\bullet$ acts as a translation of length $L(\phi_\bullet)=\log\lambda(\phi)$ along $\mathrm{Ax}(\phi_\bullet)$ (\emph{see} \cite[Remark 4.5]{CantatLamy}). For all~$p$ in $\mathbb{H}_S$ one has 
\[
\lim_{k\to +\infty}\frac{\phi_\bullet^{-k}(p)}{\lambda(\phi)}=\alpha(\phi_\bullet)\qquad\qquad \lim_{k\to +\infty}\frac{\phi_\bullet^k(p)}{\lambda(\phi)}=\omega(\phi_\bullet).
\]
\end{itemize}

When the infimum is not realized, $L(\phi_\bullet)=0$ and $\phi_\bullet$ is \textbf{parabolic}: $\phi_\bullet$ fixes a unique line in $\mathcal{L}\overline{\mathcal{Z}}_S$; this line is fixed pointwise, and all orbits $\phi_\bullet^n(p)$ in $\mathbb{H}_S$ accumulate to the corresponding boundary point when $n$ goes to $\pm\infty$.

\smallskip

There is a strong relationship between this classification and the classification of birational maps of the complex projective plane (\cite[Theorem 3.6]{Cantat:tits}): if $\phi$ is an element of $\mathrm{Bir}(\mathbb{P}^2_\mathbb{C})$, then
\smallskip
\begin{itemize}
\item[$\bullet$] $\phi_\bullet$ is an elliptic isometry if and only if $\phi$ is an elliptic map;
\smallskip
\item[$\bullet$] $\phi_\bullet$ is a parabolic isometry if and only if $\phi$ is a twist;
\smallskip
\item[$\bullet$] $\phi_\bullet$ is a hyperbolic isometry if and only if $\phi$ is a hyperbolic map.
\end{itemize}

\subsubsection*{Tits alternative}

Cantat proved the Tits alternative for the Cremona group (\cite[Theorem C]{Cantat:tits}): let $\mathrm{G}$ be a finitely generated subgroup of $\mathrm{Bir}(\mathbb{P}^2_\mathbb{C})$, then
\smallskip
\begin{itemize}
\item[$\bullet$] either $\mathrm{G}$ contains a free non abelian subgroup,
\smallskip
\item[$\bullet$] or $\mathrm{G}$ contains a subgroup of finite index that is solvable.
\end{itemize}
\smallskip

As a consequence he studied finitely generated and solvable subgroups of $\mathrm{Bir}(\mathbb{P}^2_\mathbb{C})$ without torsion (\cite[Theorem 7.3]{Cantat:tits}): let $\mathrm{G}$ be such a group, there exists a subgroup $\mathrm{G}_0$ of $\mathrm{G}$ of finite index such that
\smallskip
\begin{itemize}
\item[$\bullet$] either $\mathrm{G}_0$ is abelian,
\smallskip
\item[$\bullet$] or $\mathrm{G}_0$ preserves a foliation. 
\end{itemize}
\smallskip

\section{Proof of Theorem \ref{mainthm}}\label{sec:proofmainthm}

\subsection{Solvable groups of birational maps containing a hyperbolic map}\label{subsec:hyperbolic}

Let us recall the following criterion (for its proof see for example \cite{delaHarpe}) used on many occasions by Klein, and also by Tits (\cite{Tits}) known as Ping-Pong Lemma: {\sl let $\mathrm{H}$ be a group acting on a set $X$, let $\Gamma_1$, $\Gamma_2$ be two subgroups of~$\mathrm{H}$, and let $\Gamma$ be the subgroup generated by $\Gamma_1$ and $\Gamma_2$. Assume that $\Gamma_1$ contains at least three elements, and $\Gamma_2$ at least two elements. Suppose that there exist two non-empty subsets $X_1$, $X_2$ of $X$ such that 
\[
X_2\not\subset X_1,\qquad\gamma(X_2)\subset X_1\quad \forall\,\gamma\in\Gamma_1\smallsetminus\{\mathrm{id}\}, \qquad
\gamma^{\,\prime}(X_1)\subset X_2\quad \forall\,\gamma^{\,\prime}\in\Gamma_2\smallsetminus\{\mathrm{id}\}. 
\]
Then $\Gamma$ is isomorphic to the free product $\Gamma_1\ast\Gamma_2$.} The Ping-Pong argument allows us to prove the following:

\begin{lem}\label{lem:pas2hyp}
{\sl A solvable, non abelian subgroup of $\mathrm{Bir}(\mathbb{P}^2_\mathbb{C})$ cannot contain two hyperbolic maps $\phi$ and~$\psi$ such that $\big\{\omega(\phi_\bullet),\,\alpha(\phi_\bullet)\big\}\not=\big\{\omega(\psi_\bullet),\,\alpha(\psi_\bullet)\big\}$.}
\end{lem}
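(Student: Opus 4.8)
The plan is to argue by contradiction using the Ping-Pong Lemma together with the north-south dynamics of hyperbolic isometries on $\mathbb{H}_{\mathbb{P}^2_\mathbb{C}}$. Suppose $\mathrm{G}\subset\mathrm{Bir}(\mathbb{P}^2_\mathbb{C})$ is solvable and contains hyperbolic maps $\phi$, $\psi$ with $\{\omega(\phi_\bullet),\alpha(\phi_\bullet)\}\neq\{\omega(\psi_\bullet),\alpha(\psi_\bullet)\}$. The key dynamical input, recalled in the excerpt, is that for any $p\in\mathbb{H}_{\mathcal{S}}$ one has $\phi_\bullet^{\pm k}(p)/\lambda(\phi)^k$ converging to $\omega(\phi_\bullet)$, resp. $\alpha(\phi_\bullet)$; equivalently, for disjoint neighborhoods (in the cone topology, or in small metric balls around the corresponding boundary points) of $\{\omega(\phi_\bullet),\alpha(\phi_\bullet)\}$ and of $\{\omega(\psi_\bullet),\alpha(\psi_\bullet)\}$, high powers of $\phi_\bullet$ push the complement of a neighborhood of $\alpha(\phi_\bullet)$ into a neighborhood of $\omega(\phi_\bullet)$, and similarly for $\psi$. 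The standard consequence is that, replacing $\phi$ and $\psi$ by sufficiently high powers $\phi^N$, $\psi^N$, the subgroups $\Gamma_1=\langle\phi^N\rangle$ and $\Gamma_2=\langle\psi^N\rangle$ play ping-pong on suitable subsets $X_1,X_2$ of $\mathbb{H}_{\mathbb{P}^2_\mathbb{C}}$, so that $\langle\phi^N,\psi^N\rangle\cong\mathbb{Z}\ast\mathbb{Z}$, a free non-abelian group.

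The only subtlety in invoking the Ping-Pong Lemma as stated in the excerpt is the cardinality hypothesis: $\Gamma_1$ must have at least three elements. Since $\phi$ is hyperbolic it has infinite order, so $\Gamma_1=\langle\phi^N\rangle\cong\mathbb{Z}$ indeed has more than three elements, and likewise $\Gamma_2$; so this is automatic. The heart of the matter is therefore the construction of the ping-pong sets. I would take small closed balls $B(\omega(\phi_\bullet),\varepsilon)$, $B(\alpha(\phi_\bullet),\varepsilon)$, $B(\omega(\psi_\bullet),\varepsilon)$, $B(\alpha(\psi_\bullet),\varepsilon)$ in $\mathbb{H}_{\mathbb{P}^2_\mathbb{C}}\cup\partial\mathbb{H}_{\mathbb{P}^2_\mathbb{C}}$ which are pairwise disjoint — possible precisely because the two pairs of endpoints are distinct as sets and because the four points, being boundary points of $\mathbb{H}$, are distinct or at worst share at most one point; after discarding a possibly shared endpoint and enlarging $N$ this causes no trouble, but in fact the cleanest route is to first pass to the case where all four endpoints are distinct by a preliminary conjugation/commutator trick, or simply to handle the overlap case separately by noting that even a single common endpoint still permits ping-pong after replacing $\psi$ by a conjugate $\eta\psi\eta^{-1}$ for a well-chosen $\eta\in\mathrm{G}$ whose axis endpoints avoid those of $\phi$. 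Set $X_1=B(\omega(\phi_\bullet),\varepsilon)\cup B(\alpha(\phi_\bullet),\varepsilon)$ and $X_2=B(\omega(\psi_\bullet),\varepsilon)\cup B(\alpha(\psi_\bullet),\varepsilon)$; then $X_2\not\subset X_1$ by disjointness, and by north-south dynamics there is $N$ with $(\phi^N)_\bullet^{\,k}(X_2)\subset X_1$ for all $k\neq 0$ and $(\psi^N)_\bullet^{\,k}(X_1)\subset X_2$ for all $k\neq 0$.

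Having produced a free non-abelian subgroup $\langle\phi^N,\psi^N\rangle\leq\mathrm{G}$, I invoke the elementary fact that a solvable group contains no free non-abelian subgroup: indeed a free group of rank $2$ has trivial solvable length only if it is trivial, and every subgroup of a solvable group is solvable, whereas $F_2$ is not solvable (its derived series never terminates). This contradicts the solvability of $\mathrm{G}$, proving the lemma. Note the non-abelian hypothesis on $\mathrm{G}$ is not actually needed for this contradiction, but it is harmless to keep it; the real content is the dynamical separation argument.

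The main obstacle I anticipate is the bookkeeping around the possibility that the two axes share exactly one endpoint, e.g. $\omega(\phi_\bullet)=\omega(\psi_\bullet)$ but $\alpha(\phi_\bullet)\neq\alpha(\psi_\bullet)$. In that degenerate configuration the naive four-ball construction fails, and one must either (i) observe that a parabolic-type obstruction arises and exploit that two hyperbolic isometries sharing one fixed point at infinity generate a group containing a hyperbolic element with a new axis — or more simply (ii) replace $\psi$ by $\phi^m\psi\phi^{-m}$, whose attracting endpoint is $\phi_\bullet^m$ applied to $\omega(\psi_\bullet)$; choosing $m$ large moves this endpoint near $\omega(\phi_\bullet)$ but comparing $\phi^m\psi\phi^{-m}$ against $\psi$ itself gives two hyperbolic maps with genuinely four distinct endpoints, and ping-pong applies to that pair inside $\mathrm{G}$. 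Everything else — the cone topology on $\overline{\mathcal{Z}}_{\mathbb{P}^2_\mathbb{C}}$, the convergence statements, the compactness needed to extract $\varepsilon$ and $N$ — is standard hyperbolic-geometry input available from the references cited in the excerpt.
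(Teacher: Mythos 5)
Your overall strategy (ping-pong to produce a free subgroup, then contradict solvability) is exactly the paper's strategy; the paper, however, does not construct the ping-pong tables itself but delegates the production of $\mathrm{F}_2$ to Cantat's work, precisely because the one genuinely delicate point is the case you flag: when $\big\{\omega(\phi_\bullet),\alpha(\phi_\bullet)\big\}$ and $\big\{\omega(\psi_\bullet),\alpha(\psi_\bullet)\big\}$ share exactly one boundary point. Your treatment of that case is where the proof breaks down. Fix (ii) does not work: if, say, $\omega:=\omega(\phi_\bullet)=\omega(\psi_\bullet)$ (or the shared point is $\alpha(\phi_\bullet)$), then $\omega$ is a fixed point of $\phi_\bullet$, so the conjugate $\phi^m\psi\phi^{-m}$ has boundary fixed points $\phi_\bullet^m(\omega)=\omega$ and $\phi_\bullet^m(\alpha(\psi_\bullet))$; it therefore still shares the endpoint $\omega$ with $\psi$ (and with $\phi$), and you never obtain four distinct endpoints, for any $m$. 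Fix (i) is not an argument, and the claim underlying it is false in general hyperbolic geometry: in $\mathrm{PSL}(2,\mathbb{R})$ acting on $\mathbb{H}^2$, two hyperbolic isometries sharing exactly one fixed point at infinity (e.g.\ two non-commuting upper-triangular hyperbolic matrices) generate a solvable group in which every hyperbolic element still fixes the common endpoint. So no amount of pure ping-pong or axis-moving inside the group can dispose of this case.

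What is actually needed there is input specific to $\mathrm{Bir}(\mathbb{P}^2_\mathbb{C})$. One route (the one the surrounding text of the paper gestures at) is the multiplier homomorphism $\kappa$ on the stabilizer of the shared isotropic ray together with the spectral gap for dynamical degrees of Blanc--Cantat: any element $h$ fixing $\omega$ with $\kappa(h)\neq 1$ is hyperbolic with $\lambda(h)=\max\big(\kappa(h),\kappa(h)^{-1}\big)\geq\lambda_L>1$, which forces strong restrictions and, combined with Cantat's analysis, excludes two hyperbolic Cremona transformations having exactly one common endpoint (equivalently, produces $\mathrm{F}_2$ in that configuration). This is the content of the result of \cite{Cantat:tits} that the paper cites; your proof either needs to cite it as well or reprove it, and at present it does neither. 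The generic case (four distinct endpoints) and the final step (solvable groups contain no non-abelian free subgroup) are fine as you wrote them.
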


\begin{proof}
Assume by contradiction that $\big\{\omega(\phi_\bullet),\,\alpha(\phi_\bullet)\big\}\not=\big\{\omega(\psi_\bullet),\,\alpha(\psi_\bullet)\big\}$. Then the Ping-Pong argument implies that there exist two integers $n$ and $m$ such that $\psi^n$ and $\phi^m$ generate a subgroup of $\mathrm{G}$ isomorphic to the free group $\mathrm{F}_2$ (\emph{see} \cite{Cantat:tits}). But $\langle\phi,\,\psi\rangle$ is a solvable group: contradiction.
\end{proof}

Let $\mathrm{G}$ be an infinite solvable, non virtually abelian, subgroup of $\mathrm{Bir}(\mathbb{P}^2_\mathbb{C})$. Assume that $\mathrm{G}$ contains a hyperbolic map $\phi$. Let $\alpha(\phi_\bullet)$ and~$\omega(\phi_\bullet)$ be the two fixed points of $\phi_\bullet$ on $\partial\mathbb{H}_{\mathbb{P}^2_\mathbb{C}}$, and $\mathrm{Ax}(\phi_\bullet)$ be the geodesic passing through these two points. As $\mathrm{G}$ is solvable there exists a subgroup of $\mathrm{G}$ of index $\leq 2$ that preserves $\alpha(\phi_\bullet)$, $\omega(\phi_\bullet)$, and $\mathrm{Ax}(\phi_\bullet)$ (\emph{see} \cite[Theorem 6.4]{Cantat:tits}); let us still denote by~$\mathrm{G}$ this subgroup. Note that there is no twist in $\mathrm{G}$ since a parabolic isometry has a unique fixed point on $\partial\mathbb{H}_{\mathbb{P}^2_\mathbb{C}}$. One has a morphism $\kappa\colon\mathrm{G}\to\mathbb{R}_{>0}$ such that 
\[
\psi_\bullet(\ell)=\kappa(\psi)\ell
\]
for any $\ell$ in $\overline{\mathcal{Z}}_{\mathbb{P}^2_\mathbb{C}}$ lying on $\mathrm{Ax}(\phi_\bullet)$.

The kernel of $\kappa$ is an infinite subgroup that contains only elliptic maps. Indeed the set of elliptic elements of $\mathrm{G}$ coincides with $\ker\kappa$; and $[\mathrm{G},\mathrm{G}]\subset \ker\kappa$ so if $\ker\kappa$ is finite, $\mathrm{G}$ is abelian up to finite index which is by assumption impossible. 

\subsubsection*{Gap property}

If $\psi$ is an hyperbolic birational map of $\mathrm{G}$, then $\kappa(\psi)=L(\psi_\bullet)=\log \lambda(\psi)$. Recall that $\lambda(\phi)$ is an algebraic integer with all Galois conjugates in the unit disk, that is a Salem number, or a Pisot number. The smallest known number is the Lehmer number $\lambda_L\simeq 1,176$ which is a root of $X^{10}+X^9-X^7-X^6-X^5-X^4-X^3+X+1$. Blanc and Cantat prove in \cite[Corollary 2.7]{BlancCantat} that there is a gap in the dynamical spectrum $\Lambda=\big\{\lambda(\phi)\,\vert\,\phi\in\mathrm{Bir}(\mathbb{P}^2_\mathbb{C})\}$: there is no dynamical degree in $]1,\lambda_L[$.

The gap property implies that in fact $\kappa$ is a morphism from $\mathrm{G}$ to a subgroup of $\mathbb{R}_{>0}$ isomorphic to $\mathbb{Z}$.

\subsubsection*{Elliptic subgroups of the Cremona group with a large normalizer}

Consider in $\mathbb{P}^2_\mathbb{C}$ the complement of the union of the three lines $\{x=0\}$, $\{y=0\}$ and $\{z=0\}$. Denote by $\mathcal{U}$ this open set isomorphic to $\mathbb{C}^*\times\mathbb{C}^*$. One has an action of $\mathbb{C}^*\times\mathbb{C}^*$ on $\mathcal{U}$ by translation. Furthermore~$\mathrm{GL}(2,\mathbb{Z})$ acts on $\mathcal{U}$ by monomial maps
\[
\left[
\begin{array}{cc}
p & q\\
r & s
\end{array}
\right] \mapsto \big((x,y)\mapsto(x^py^q,x^ry^s)\big)
\]
One thus has an injective morphism from $(\mathbb{C}^*\times\mathbb{C}^*)\rtimes\mathrm{GL}(2,\mathbb{Z})$ into $\mathrm{Bir}(\mathbb{P}^2_\mathbb{C})$. Let $\mathrm{G}_{\textrm{toric}}$ be its image.

One can now apply \cite[Theorem 4]{DelzantPy} that says that if there exists a short exact sequence
\[
1 \longrightarrow \mathrm{A}\longrightarrow \mathrm{N}\longrightarrow \mathrm{B}\longrightarrow 1
\]
where $\mathrm{N}\subset\mathrm{Bir}(\mathbb{P}^2_\mathbb{C})$ contains at least one hyperbolic element, and $\mathrm{A}\subset\mathrm{Bir}(\mathbb{P}^2_\mathbb{C})$ is an infinite and elliptic\footnote{A subgroup of $\mathrm{Bir}(\mathbb{P}^2_\mathbb{C})$ is elliptic if it fixes a point in $\mathbb{H}_{\mathbb{P}^2_\mathbb{C}}$.} group, then $\mathrm{N}$ is up to conjugacy a subgroup of $\mathrm{G}_{\textrm{toric}}$. Hence up to birational conjugacy $\mathrm{G}\subset\mathrm{G}_{\textrm{toric}}$. Recall now that if $\psi$ is a hyperbolic map of the form $(x^ay^b,x^cy^d)$, then to preserve $\alpha(\psi_\bullet)$ and $\omega(\psi_\bullet)$ is equivalent to preserve the eigenvectors of the matrix $\left[
\begin{array}{cc}
a & b\\
c & d
\end{array}
\right]$. We can now thus state: 

\begin{pro}\label{pro:1hyp}
{\sl Let $\mathrm{G}$ be an infinite solvable, non virtually abelian, subgroup of $\mathrm{Bir}(\mathbb{P}^2_\mathbb{C})$. If $\mathrm{G}$ contains a hyperbolic birational map, then $\mathrm{G}$ is, up to conjugacy and finite index, a subgroup of the group generated by
\[
\big\{ (x^py^q,x^ry^s),\,(\alpha x,\beta y)\,\vert\, \alpha,\,\beta\in\mathbb{C}^*\big\}
\]
where $\left[\begin{array}{cc}
p & q\\
r & s
\end{array}\right]$ denotes an element of $\mathrm{GL}(2,\mathbb{Z})$ with spectral radius $>1$.
}
\end{pro}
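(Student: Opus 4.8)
The plan is to assemble the facts established in the paragraphs preceding the statement and then add one last descent into the monomial group; throughout I argue up to finite index and up to birational conjugacy. First I would exploit that $\mathrm{G}$ is solvable and contains the hyperbolic map $\phi$. Being solvable, $\mathrm{G}$ has no non-abelian free subgroup, so by \cite[Theorem 6.4]{Cantat:tits} a subgroup of index at most $2$ fixes each of the two boundary points $\alpha(\phi_\bullet)$ and $\omega(\phi_\bullet)$ of $\mathrm{Ax}(\phi_\bullet)$; I replace $\mathrm{G}$ by it. This gives the homomorphism $\kappa\colon\mathrm{G}\to\mathbb{R}_{>0}$ for which $\psi_\bullet$ multiplies a fixed isotropic generator of $\mathbb{R}_{>0}\,\omega(\phi_\bullet)$ by $\kappa(\psi)$, with $\kappa(\phi)=\lambda(\phi)>1$; I also note that $\mathrm{G}$ contains no twist, a parabolic isometry having a single fixed point on $\partial\mathbb{H}_{\mathbb{P}^2_\mathbb{C}}$. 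Put $K=\ker\kappa$. If $\psi\in K$, then $\psi_\bullet$ fixes $\omega(\phi_\bullet)$, hence also $\alpha(\phi_\bullet)$ (it preserves the intersection form and the ray $\mathbb{R}_{>0}\,\alpha(\phi_\bullet)$, and $\omega(\phi_\bullet)\cdot\alpha(\phi_\bullet)\neq0$); thus $\psi_\bullet$ is the identity on the plane spanned by these two vectors and fixes the geodesic $\mathrm{Ax}(\phi_\bullet)$ pointwise. Hence $K$ is an \emph{elliptic} subgroup of $\mathrm{Bir}(\mathbb{P}^2_\mathbb{C})$ — it fixes a point of $\mathbb{H}_{\mathbb{P}^2_\mathbb{C}}$ — all of whose elements are elliptic maps. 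Finally $[\mathrm{G},\mathrm{G}]\subseteq K$, so $K$ is infinite; were it finite, $\mathrm{G}$ would be virtually abelian, contrary to hypothesis.

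Next I would feed this into the two rigidity inputs. By the gap in the dynamical spectrum \cite[Corollary 2.7]{BlancCantat}, $\kappa(\mathrm{G})$ is a discrete, hence cyclic, subgroup of $(\mathbb{R}_{>0},\times)$, and it is non-trivial because it contains $\lambda(\phi)$; so $\kappa(\mathrm{G})\cong\mathbb{Z}$ and there is a short exact sequence
\[
1\longrightarrow K\longrightarrow\mathrm{G}\longrightarrow\mathbb{Z}\longrightarrow 1
\]
in which $\mathrm{G}\subset\mathrm{Bir}(\mathbb{P}^2_\mathbb{C})$ contains a hyperbolic element and $K$ is an infinite elliptic subgroup. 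I would then apply \cite[Theorem 4]{DelzantPy} to deduce that, up to birational conjugacy, $\mathrm{G}$ is contained in $\mathrm{G}_{\mathrm{toric}}=(\mathbb{C}^*\times\mathbb{C}^*)\rtimes\mathrm{GL}(2,\mathbb{Z})$.

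It will then remain to locate $\mathrm{G}$ inside $\mathrm{G}_{\mathrm{toric}}$. Let $\pi\colon\mathrm{G}_{\mathrm{toric}}\to\mathrm{GL}(2,\mathbb{Z})$ be the projection onto the second factor. The image $\pi(\mathrm{G})$ is solvable, and since $\mathrm{GL}(2,\mathbb{Z})$ contains a free subgroup of finite index, $\pi(\mathrm{G})$ is virtually cyclic; it is infinite, for otherwise $\mathrm{G}$ would be virtually contained in the abelian group $\mathbb{C}^*\times\mathbb{C}^*$. Replacing $\mathrm{G}$ by a finite-index subgroup, I may assume $\pi(\mathrm{G})=\langle M\rangle$ with $M\in\mathrm{GL}(2,\mathbb{Z})$ of infinite order. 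Since $\phi\in\mathrm{G}$ is hyperbolic, and the dynamical degree of an element $(\alpha x^ay^b,\beta x^cy^d)$ of $\mathrm{G}_{\mathrm{toric}}$ equals the spectral radius of $\left[\begin{array}{cc}a & b\\ c & d\end{array}\right]$, the matrix $\pi(\phi)$ — a non-zero power of $M$ — has spectral radius $>1$, hence so does $M$. Writing $M=\left[\begin{array}{cc}p & q\\ r & s\end{array}\right]$, I obtain $\mathrm{G}\subseteq(\mathbb{C}^*\times\mathbb{C}^*)\rtimes\langle M\rangle$, which is exactly the subgroup of $\mathrm{Bir}(\mathbb{P}^2_\mathbb{C})$ generated by the monomial map $(x^py^q,x^ry^s)$ and the diagonal automorphisms $(\alpha x,\beta y)$, $\alpha,\beta\in\mathbb{C}^*$, as claimed.

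The real weight is carried by the two cited results, the Delzant--Py extension theorem and the Blanc--Cantat gap theorem. Within the argument itself the delicate point is the verification of the hypotheses of \cite[Theorem 4]{DelzantPy} — that $K=\ker\kappa$ is an \emph{infinite elliptic} subgroup, and not merely a subgroup whose elements all happen to be elliptic maps — for which I use the observation that $K$ fixes $\mathrm{Ax}(\phi_\bullet)$ pointwise; keeping the several finite-index reductions consistent in the final step is routine but must be done with care.
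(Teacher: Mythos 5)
Your proposal is correct and follows essentially the same route as the paper: pass to the index-$\le 2$ subgroup fixing $\alpha(\phi_\bullet)$ and $\omega(\phi_\bullet)$, build the morphism $\kappa$, show $\ker\kappa$ is an infinite elliptic group, use the Blanc--Cantat gap to get $\kappa(\mathrm{G})\cong\mathbb{Z}$, and invoke Delzant--Py to land in $\mathrm{G}_{\mathrm{toric}}$. The only (harmless) divergence is the last descent inside $\mathrm{G}_{\mathrm{toric}}$, where you use solvability together with the virtual freeness of $\mathrm{GL}(2,\mathbb{Z})$ while the paper argues via preservation of the eigenvectors of the hyperbolic matrix; your explicit check that $\ker\kappa$ fixes $\mathrm{Ax}(\phi_\bullet)$ pointwise (and not merely consists of elliptic maps) is a welcome precision on a point the paper leaves implicit.
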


\subsection{Solvable groups with a twist}\label{subsec:parabolic}

Consider a solvable, non abelian subgroup $\mathrm{G}$ of $\mathrm{Bir}(\mathbb{P}^2_\mathbb{C})$. Let us assume that~$\mathrm{G}$ contains a twist $\phi$; the map $\phi$ preserves a unique fibration~$\mathcal{F}$ that is rational or elliptic. Let us prove that any element of $\mathrm{G}$ preserves $\mathcal{F}$. Denote by $\alpha(\phi_\bullet)\in\partial\mathbb{H}_{\mathbb{P}^2_\mathbb{C}}$ the fixed point of $\phi_\bullet$. Take one element in $\mathcal{L}\overline{\mathcal{Z}}_{\mathbb{P}^2_\mathbb{C}}$ still denoted $\alpha(\phi_\bullet)$ that represents $\alpha(\phi_\bullet)$. Take $\varphi\in\mathrm{G}$ such that $\varphi(\alpha(\phi_\bullet))\not=\alpha(\phi_\bullet)$. Then $\psi=\varphi\phi\varphi^{-1}$ is parabolic and fixes the unique element $\alpha(\psi_\bullet)$ of $\mathcal{L}\overline{\mathcal{Z}}_{\mathbb{P}^2_\mathbb{C}}$ proportional to $\varphi(\alpha(\phi_\bullet))$. Take $\varepsilon>0$ such that $\mathcal{U}(\alpha(\phi_\bullet),\varepsilon)\cap\mathcal{U}(\alpha(\psi_\bullet),\varepsilon)=\emptyset$ where
\[
\mathcal{U}(\alpha,\varepsilon)=\big\{\ell\in\mathcal{L}\overline{\mathcal{Z}}_{\mathbb{P}^2_\mathbb{C}}\,\vert\,\alpha\cdot\ell<\varepsilon\big\}.
\]
Since $\psi_\bullet$ is parabolic, then for $n$ large enough $\psi_\bullet^n\big(\mathcal{U}(\alpha(\phi_\bullet),\varepsilon)\big)$ is included in a $\mathcal{U}(\alpha(\psi_\bullet),\varepsilon)$. For $m$ sufficiently lar\-ge~$\phi_\bullet^m\psi_\bullet^n\big(\mathcal{U}(\alpha(\phi_\bullet),\varepsilon)\big)\subset\big(\mathcal{U}(\alpha(\phi_\bullet),\varepsilon/2)\big)\subsetneq\big(\mathcal{U}(\alpha(\phi_\bullet),\varepsilon)\big)$; hence $\phi_\bullet^m\psi_\bullet^n$ is hyperbolic. You can by this way build two hyperbolic maps whose sets of fixed points are distinct: this gives a contradiction with Lemma~\ref{lem:pas2hyp}. So for any $\varphi\in\mathrm{G}$ one has : $\alpha(\phi_\bullet)=\alpha(\varphi_\bullet)$; one can thus state the following result.

\begin{pro}
{\sl Let $\mathrm{G}$ be a solvable, non abelian subgroup of $\mathrm{Bir}(\mathbb{P}^2_\mathbb{C})$ that contains a twist $\phi$. Then 
\smallskip
\begin{itemize}
\item[$\bullet$] if $\phi$ is a Jonqui\`eres twist, then $\mathrm{G}$ preserves a rational fibration, that is up to birational conjugacy $\mathrm{G}$ is a subgroup of $\mathrm{PGL}(2,\mathbb{C}(y))\rtimes\mathrm{PGL}(2,\mathbb{C})$,
\smallskip
\item[$\bullet$] if $\phi$ is a Halphen twist, then $\mathrm{G}$ preserves an elliptic fibration.
\end{itemize} }
\end{pro}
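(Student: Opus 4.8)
The plan is to leverage the preceding paragraph, which has already established that every element $\varphi \in \mathrm{G}$ satisfies $\alpha(\varphi_\bullet) = \alpha(\phi_\bullet)$ as a point of $\partial \mathbb{H}_{\mathbb{P}^2_\mathbb{C}}$. The whole statement then follows from translating this fixed-point information back into the geometry of $\mathbb{P}^2_\mathbb{C}$. First I would recall the structure of a twist $\phi$ from the classification theorem: $\phi$ preserves a unique fibration $\mathcal{F}$, rational if $\phi$ is a Jonqui\`eres twist and elliptic if $\phi$ is a Halphen twist, and in both cases the isotropic point $\alpha(\phi_\bullet)$ fixed by the parabolic isometry $\phi_\bullet$ is exactly the class in $\partial \mathbb{H}_{\mathbb{P}^2_\mathbb{C}}$ determined by (the total transform of) a general fibre of $\mathcal{F}$. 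This is the standard dictionary (see \cite[Theorem 3.6]{Cantat:tits} and \cite{DillerFavre, BlancDeserti:degreegrowth}): a twist's parabolic fixed point on the boundary is represented by the fibre class of its invariant pencil, and conversely that boundary point determines the pencil.

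Next, for an arbitrary $\varphi \in \mathrm{G}$, I would argue that $\varphi$ must send a general fibre of $\mathcal{F}$ to a general fibre of $\mathcal{F}$. Indeed, $\varphi$ acts on $\overline{\mathcal{Z}}_{\mathbb{P}^2_\mathbb{C}}$ by the isometry $\varphi_\bullet$, and $\varphi$ maps the pencil $\mathcal{F}$ to some pencil $\varphi(\mathcal{F})$ whose fibre class is $\varphi_\bullet(\alpha(\phi_\bullet))$. Since we have shown $\varphi_\bullet(\alpha(\phi_\bullet)) = \alpha(\phi_\bullet)$ (this is precisely the conclusion $\alpha(\phi_\bullet) = \alpha(\varphi_\bullet)$ combined with the fact that $\varphi$ conjugates $\phi$ to a parabolic with the same fixed point, hence $\varphi_\bullet$ fixes that point), the pencil $\varphi(\mathcal{F})$ has the same fibre class as $\mathcal{F}$; by uniqueness of a pencil with a given fibre class (two distinct fibres of a pencil meet only along the base points, so the class determines the pencil), $\varphi(\mathcal{F}) = \mathcal{F}$. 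Thus every element of $\mathrm{G}$ preserves $\mathcal{F}$.

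Finally I would identify the two cases. If $\phi$ is a Jonqui\`eres twist, $\mathcal{F}$ is a rational fibration; conjugating birationally so that $\mathcal{F}$ becomes the pencil of lines $\{y = \mathrm{cst}\}$ on $\mathbb{P}^1 \times \mathbb{P}^1$ (or $\mathbb{P}^2_\mathbb{C}$), the group of birational self-maps preserving this fibration is exactly the de Jonqui\`eres group $\mathrm{PGL}(2,\mathbb{C}(y)) \rtimes \mathrm{PGL}(2,\mathbb{C})$ — the first factor acting on the generic fibre $\mathbb{P}^1_{\mathbb{C}(y)}$ and the second permuting the fibres — so $\mathrm{G}$ is conjugate to a subgroup of it. If $\phi$ is a Halphen twist, $\mathcal{F}$ is an elliptic fibration and there is nothing more to prove beyond the invariance just established. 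The one genuine subtlety — and the step I expect to need the most care — is the passage from "$\varphi_\bullet$ fixes $\alpha(\phi_\bullet)$ in $\partial\mathbb{H}_{\mathbb{P}^2_\mathbb{C}}$" to "$\varphi$ preserves the pencil $\mathcal{F}$ as a pencil of curves in $\mathbb{P}^2_\mathbb{C}$": one must check that the boundary point $\alpha(\phi_\bullet)$ really does encode the fibre class and not merely an asymptotic direction, which requires knowing that $\alpha(\phi_\bullet)$ is a \emph{rational} boundary point (it lies in $\mathcal{Z}_{\mathbb{P}^2_\mathbb{C}}$, not just in the completion) — this is exactly the content of the structure theory of twists in \cite{DillerFavre, BlancDeserti:degreegrowth} and \cite[Theorem 3.6]{Cantat:tits}, which I would cite for this point rather than reprove.
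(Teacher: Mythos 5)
Your second and third steps --- passing from ``$\varphi_\bullet$ fixes the boundary point $\alpha(\phi_\bullet)$'' to ``$\varphi$ preserves the pencil $\mathcal{F}$'', and then identifying the Jonqui\`eres group $\mathrm{PGL}(2,\mathbb{C}(y))\rtimes\mathrm{PGL}(2,\mathbb{C})$ --- are correct, and in fact more careful than the paper, which passes from ``$\alpha(\phi_\bullet)=\alpha(\varphi_\bullet)$ for all $\varphi$'' to the Proposition without comment. Your observation that $\alpha(\phi_\bullet)$ is a \emph{rational} boundary class representing the fibre of the invariant pencil, so that fixing the boundary ray forces $\varphi_\bullet$ to fix the fibre class itself (integrality rules out a nontrivial scaling factor) and hence to permute the pencil, is exactly the justification the paper leaves implicit.

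The gap is that you take as given the claim that every $\varphi\in\mathrm{G}$ fixes $\alpha(\phi_\bullet)$. That claim \emph{is} the paper's proof of the Proposition --- the Proposition is stated as the summary of the paragraph establishing it --- and it is the only place where the solvability hypothesis enters; a proof that defers it has not proved anything. The missing argument runs as follows. Suppose some $\varphi\in\mathrm{G}$ satisfies $\varphi_\bullet(\alpha(\phi_\bullet))\neq\alpha(\phi_\bullet)$ as boundary points. Then $\psi=\varphi\phi\varphi^{-1}\in\mathrm{G}$ is a twist whose unique boundary fixed point $\alpha(\psi_\bullet)$ is proportional to $\varphi_\bullet(\alpha(\phi_\bullet))$, hence distinct from $\alpha(\phi_\bullet)$. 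Choosing $\varepsilon>0$ with $\mathcal{U}(\alpha(\phi_\bullet),\varepsilon)\cap\mathcal{U}(\alpha(\psi_\bullet),\varepsilon)=\emptyset$ and using that the orbits of a parabolic isometry accumulate only at its fixed point, one finds $m,n$ such that $\phi_\bullet^m\psi_\bullet^n$ maps $\mathcal{U}(\alpha(\phi_\bullet),\varepsilon)$ strictly inside itself, so $\phi^m\psi^n$ is hyperbolic; varying the construction yields two hyperbolic elements of $\mathrm{G}$ with distinct fixed-point pairs on $\partial\mathbb{H}_{\mathbb{P}^2_\mathbb{C}}$, contradicting Lemma~\ref{lem:pas2hyp} (two such hyperbolic isometries have powers generating a free group, impossible in a solvable group). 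You need to supply this contradiction argument, or an equivalent one, for your proof to stand on its own.
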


\begin{rem}
Both cases are mutually exclusive.
\end{rem}

Note that if $\mathrm{G}$ is a subgroup of $\mathrm{Bir}(\mathbb{P}^2_\mathbb{C})$ that preserves an elliptic fibration, then $\mathrm{G}$ is countable~(\cite{Cantat:these}). Let us explain briefly why. A smooth rational projective surface $\mathcal{S}$ is a \textbf{\textit{Halphen surface}} if there exists an integer $m>0$ such that the linear system $\vert -mK_\mathcal{S}\vert$ is of dimension $1$, has no fixed component, and has no base point. The smallest positive integer for which $\mathcal{S}$ satisfies such a property is the \textbf{\textit{index}} of $\mathcal{S}$. If $\mathcal{S}$ is a Halphen surface of index $m$, then $K_\mathcal{S}^2=0$ and, by the genus formula, the linear system $\vert -mK_\mathcal{S}\vert$ defines a genus $1$ fibration $\mathcal{S}\to\mathbb{P}^1_\mathbb{C}$. This fibration is \textbf{\textit{relatively minimal}} in the sense that there is no $(-1)$-curve contained in a fiber. The following properties are equivalent:
\smallskip
\begin{itemize}
\item[$\bullet$] $\mathcal{S}$ is a Halphen surface of index $m$,
\smallskip
\item[$\bullet$] there exists an irreducible pencil of curves of degree $3m$ with $9$ base points of multiplicity $m$ in $\mathbb{P}^2_\mathbb{C}$ such that $\mathcal{S}$ is the blow-up of the $9$ base points and $\vert -mK_\mathcal{S}\vert$ is the proper transform of this pencil (the base points set may contain infinitely near points).
\end{itemize}
\smallskip
As a corollary of the classification of relatively minimal elliptic surfaces the relative minimal model of a rational elliptic surface is a Halphen surface of index $m$ (\cite[Chapter 2, \S 10]{IskovskikhShafarevich}). Up to conjugacy $\mathrm{G}$ is a subgroup of $\mathrm{Aut}(\mathcal{S})$ where $\mathcal{S}$ denotes a Halphen surface of index $m$. The action of $\mathrm{G}$ on $\mathrm{NS}(\mathcal{S})$ is almost faithful, and $\mathrm{G}$ is a discrete (it preserves the integral structure of $\mathrm{NS}(\mathcal{S})$) and virtually abelian (it preserves the intersection form and the class of the elliptic fibration) subgroup of $\mathrm{Aut}(\mathcal{S})$. So one has:

\begin{cor}
{\sl If $\mathrm{G}$ is an uncountable, solvable, non abelian subgroup of $\mathrm{Bir}(\mathbb{P}^2_\mathbb{C})$, then $\mathrm{G}$ doesn't contain a Halphen twist.}
\end{cor}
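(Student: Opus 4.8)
The plan is to argue by contradiction. Suppose $\mathrm{G}\subset\mathrm{Bir}(\mathbb{P}^2_\mathbb{C})$ is uncountable, solvable, non abelian, and contains a Halphen twist $\phi$. The first step is to invoke the Proposition preceding this corollary: since $\mathrm{G}$ is solvable, non abelian and contains the twist $\phi$, it preserves the unique fibration attached to $\phi$, which is elliptic precisely because $\phi$ is a \emph{Halphen} twist (by the classification of birational maps recalled in \S\ref{sec:someptes}). So $\mathrm{G}$ is a solvable, non abelian subgroup of $\mathrm{Bir}(\mathbb{P}^2_\mathbb{C})$ preserving an elliptic fibration.

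Next I would pass to a relatively minimal model of this elliptic fibration. By the classification of relatively minimal rational elliptic surfaces recalled above, that model is a Halphen surface $\mathcal{S}$ of some index $m$, and up to birational conjugacy $\mathrm{G}$ becomes a subgroup of $\mathrm{Aut}(\mathcal{S})$. The key point is then that such a $\mathrm{G}$ must be countable: its action on $\mathrm{NS}(\mathcal{S})$ is almost faithful (the kernel being finite), it preserves the integral structure of $\mathrm{NS}(\mathcal{S})$ so the image is discrete in $\mathrm{GL}(\mathrm{NS}(\mathcal{S}))$, hence countable, and therefore $\mathrm{G}$ itself is countable — equivalently one may simply cite the fact, already noted in the text following the Proposition, that any subgroup of $\mathrm{Bir}(\mathbb{P}^2_\mathbb{C})$ preserving an elliptic fibration is countable (\cite{Cantat:these}). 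This contradicts the hypothesis that $\mathrm{G}$ is uncountable, and the corollary follows.

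The only genuinely non-formal ingredient is the almost faithfulness of the action of $\mathrm{G}$ on $\mathrm{NS}(\mathcal{S})$, i.e. the finiteness of the subgroup of $\mathrm{Aut}(\mathcal{S})$ acting trivially on $\mathrm{NS}(\mathcal{S})$; for a relatively minimal rational elliptic (Halphen) surface this is controlled by the absence of non-trivial regular vector fields together with the structure of fibrewise automorphisms, and it is exactly the input supplied by the preceding discussion and by \cite{Cantat:these} that I would quote rather than reprove. Everything else is a direct chaining of the Proposition above with this countability statement, so I do not expect any substantial obstacle.
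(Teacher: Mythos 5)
Your argument is exactly the paper's: apply the preceding Proposition to conclude that $\mathrm{G}$ preserves the elliptic fibration of the Halphen twist, then invoke the countability of any subgroup of $\mathrm{Bir}(\mathbb{P}^2_\mathbb{C})$ preserving an elliptic fibration (via the passage to a Halphen surface $\mathcal{S}$ and the almost faithful, discrete action on $\mathrm{NS}(\mathcal{S})$, as in \cite{Cantat:these}), contradicting uncountability. This matches the paper's reasoning step for step, so there is nothing to add.
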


\begin{eg}
Let us come back to the example given in \S\ref{sec:intro}. If $\phi\in\mathrm{Bir}(\mathbb{P}^2_\mathbb{C})$ preserves a unique fibration that is rational then one can assume that up to birational conjugacy this fibration is given, in the affine chart $z=1$, by $y=$ cst. If $\phi$ preserves $y=$ cst fiberwise, then 
\smallskip
\begin{itemize}
\item[$\bullet$] $\phi$ is contained in a maximal abelian subgroup denoted $\mathrm{Ab}(\phi)$ that preserves $y=$ cst fiberwise (\cite{Deserti:autbir}),
\smallskip
\item[$\bullet$] the centralizer of $\phi$ is a finite extension of $\mathrm{Ab}(\phi)$ (\emph{see} \cite[Theorem B]{CerveauDeserti:centralisateurs}).
\end{itemize}
\smallskip
This allows us to establish that if $\phi$ preserves a fibration not fiberwise, then the centralizer of $\phi$ is virtually solvable. For instance if $\phi=\big(x+a(y),y+1\big)$ (resp. $\big(b(y)x,\beta y\big)$ or $\big(x+a(y),\beta y\big)$ with $\beta\in\mathbb{C}^*$ of infinite order) preserves a unique fibration, then the centralizer of $\phi$ is solvable and metabelian (\cite[Propositions 5.1 and 5.2]{CerveauDeserti:centralisateurs}).
\end{eg}

\subsection{Solvable groups with no hyperbolic map, and no twist}\label{subsec:elliptic}

Let $M$ be a smooth, irreducible, complex, projective variety of dimension $n$. Fix a K\"ahler form $\kappa$ on $M$. If $\ell$ is a positive integer, denote by $\pi_i\colon M^\ell\to M$ the projection onto the $i$-th factor. The manifold $M^\ell$ is then endowed with the K\"ahler form $\displaystyle\sum_{i=1}^\ell\pi_i^*\kappa$ which induces a K\"ahler metric. To any $\phi\in\mathrm{Bir}(M)$ one can associate its graph $\Gamma_\phi\subset M\times M$ defined as the Zariski closure of 
\[
\big\{(z,\phi(z))\in M\times M\,\vert\, z\in M\smallsetminus\mathrm{Ind}\,\phi\big\}. 
\]
By construction $\Gamma_\phi$ is an irreducible subvariety of $M\times M$ of dimension $n$. Both projections $\pi_1$, $\pi_2\colon M\times M\to~M$ restrict to a birational morphism $\pi_1$, $\pi_2\colon\Gamma_\phi\to M$.

\smallskip

The \textbf{\textit{total degree}} $\mathrm{tdeg}\,\phi$ of $\phi\in\mathrm{Bir}(M)$ is defined as the volume of $\Gamma_\phi$ with respect to the fixed metric on~$M\times M$:
\[
\mathrm{tdeg}\,\phi=\int_{\Gamma_\phi}\big(\pi_1^*\kappa+\pi_2^*\kappa\big)^n=\int_{M\smallsetminus\mathrm{Ind}\,\phi}\big(\kappa+\phi^*\kappa\big)^n.
\]

\smallskip

Let $d\geq 1$ be a natural integer, and set 
\[
\mathrm{Bir}_d(M)=\big\{\phi\in\mathrm{Bir}(M)\,\vert\,\mathrm{tdeg}\,\phi\leq d\big\}.
\]
A subgroup $\mathrm{G}$ of $\mathrm{Bir}(M)$ has \textbf{\textit{bounded degree}} if it is contained in $\mathrm{Bir}_d(M)$ for some $d\in\mathbb{N}^*$.

\smallskip

Any subgroup $\mathrm{G}$ of $\mathrm{Bir}(M)$ that has bounded degree can be regularized, that is up to birational conjugacy all indeterminacy points of all elements of $\mathrm{G}$ disappear simultaneously:

\begin{thm}[\cite{Weil}]
{\sl Let $M$ be a complex projective variety, and let $\mathrm{G}$ be a subgroup of $\mathrm{Bir}(M)$. If $\mathrm{G}$ has bounded degree, there exists a smooth, complex, projective variety $M'$, and a birational map $\psi\colon M'\dashrightarrow M$ such that $\psi^{-1}\mathrm{G}\psi$ is a subgroup of $\mathrm{Aut}(M')$.}
\end{thm}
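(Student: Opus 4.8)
The plan is to show that a bounded-degree subgroup sits inside an algebraic group acting rationally on $M$, and then to invoke Weil's regularization of rational group actions. First I would exploit the bounded-degree hypothesis to put the relevant graphs into an algebraic family. For $\phi\in\mathrm{G}$ the graph $\Gamma_\phi\subset M\times M$ is an $n$-dimensional irreducible cycle whose degree with respect to the polarization $\pi_1^*\kappa+\pi_2^*\kappa$ equals $\mathrm{tdeg}\,\phi\leq d$; since $M\times M$ is projective and cycles of bounded degree in a projective variety form a bounded family, all the $[\Gamma_\phi]$ lie in finitely many components of the Chow variety of $M\times M$, hence in a single quasi-projective scheme $\mathcal{C}_d$ of finite type. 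The sublocus of $\mathcal{C}_d$ whose points are graphs of birational self-maps of $M$ is constructible; let $\mathcal{C}$ be the Zariski closure in $\mathcal{C}_d$ of $\{[\Gamma_\phi]\,\vert\,\phi\in\mathrm{G}\}$, a finite union of irreducible subvarieties containing the point $[\Delta]$ of the identity.

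Next I would equip $\mathcal{C}$ with a rational group structure compatible with composition. Inversion is the regular involution of $M\times M$ swapping the two factors. Composition is algebraic on graphs: $\Gamma_{\psi\circ\phi}$ is the closure of the image, under the projection $M\times M\times M\to M\times M$ onto the outer factors, of the fibered product $(\Gamma_\phi\times M)\cap(M\times\Gamma_\psi)$ over the middle copy of $M$; this defines a rational map $\mathcal{C}\times\mathcal{C}\dashrightarrow\mathcal{C}$, and associativity together with the left and right cancellation properties hold on a dense open set because they hold set-theoretically wherever everything is defined. Thus $\mathcal{C}$ is a birational group in the sense of Weil, and by Weil's group-chunk theorem there is an algebraic group $\mathrm{A}$ together with a birational map $\mathcal{C}\dashrightarrow\mathrm{A}$ intertwining the two partial group laws. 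Moreover the evaluation $\mathcal{C}\times M\dashrightarrow M$, $([\Gamma_\phi],z)\mapsto\phi(z)$, transports through this birational map to a rational action $\mathrm{A}\times M\dashrightarrow M$, and $\mathrm{G}$ embeds into $\mathrm{A}(\mathbb{C})$ compatibly with this action.

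Then I would regularize. Weil's theorem on rational actions of algebraic groups produces a variety $M_0$, a birational map $\psi_0\colon M_0\dashrightarrow M$, and a regular everywhere-defined action $\mathrm{A}\times M_0\to M_0$ conjugate to the given rational action: one covers $\mathrm{A}$ by translates $g\,U$ of an open set $U$ over which the action is defined, glues the corresponding birational copies of $M$ by the transition maps coming from the group law, and checks on triple overlaps, using associativity, that these transitions form a genuine cocycle, so the glued variety $M_0$ is well-defined and $\mathrm{A}$ acts on it by automorphisms. In particular $\psi_0^{-1}\mathrm{G}\psi_0\subset\mathrm{Aut}(M_0)$. Finally, to make the model smooth and projective while keeping the action regular, I would pass to an $\mathrm{A}$-equivariant smooth projective model of $M_0$: an equivariant completion exists (Sumihiro's theorem when $\mathrm{A}$ is linear, and a Nagata-type completion of the $\mathrm{A}$-action in general), and equivariant resolution of singularities in characteristic zero then yields the desired smooth complex projective $M'$ with a birational map $\psi\colon M'\dashrightarrow M$ and $\psi^{-1}\mathrm{G}\psi\subset\mathrm{Aut}(M')$.

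The technical heart, and the main obstacle, is the passage from the rational group law on $\mathcal{C}$ to the honest algebraic group $\mathrm{A}$ and the consistent gluing of the local models of $M$: one must verify that the fibered-product operation on graphs really is associative and satisfies the cancellation laws generically, and that the resulting transition maps satisfy the cocycle identity on triple overlaps, which is precisely the content of Weil's group-chunk and regularization theorems. A secondary point to watch is the finiteness step, namely that bounded total degree forces the graphs into a finite-type parameter scheme, and the last step of obtaining a model that is simultaneously smooth, projective, and carries a regular $\mathrm{A}$-action, equivalently a regular $\mathrm{G}$-action.
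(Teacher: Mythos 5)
The paper does not prove this statement: it is quoted as Weil's theorem, with the proof deferred to the cited references (Huckleberry--Zaitsev, Zaitsev, and the sketch in Cantat's paper). Your outline is precisely the argument of those references --- bounded total degree puts the graphs in finitely many Chow components, constructibility of the birational locus and density of $\mathrm{G}$ make the closure a group chunk, Weil's group-chunk and regularization theorems produce the algebraic group and the regular model, and equivariant completion plus resolution give smoothness and projectivity --- so it is the ``same approach'' in the only sense available, and I see no gap in it beyond the standard technical points you yourself flag (that the Zariski closure may contain degenerate cycles, handled by passing to the dense open birational locus, and the cocycle verification in the gluing).
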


The proof of this result can be found in \cite{HuckleberryZaitsev, Zaitsev}; an heuristic idea appears in \cite{Cantat:morphisms}.

\section{Applications}\label{sec:appl}

\subsection{Nilpotent subgroups of $\mathrm{Bir}(\mathbb{P}^2_\mathbb{C})$}

Let us recall that if $\mathrm{G}$ is a group, the \textbf{\textit{descending central series}} of $\mathrm{G}$ is defined by
\[
C^0\mathrm{G}=\mathrm{G}\qquad\qquad C^{n+1}\mathrm{G}=[\mathrm{G},C^n\mathrm{G}] \quad\forall\,n\geq 0.
\]
We say that $\mathrm{G}$ is \textbf{\textit{nilpotent}} if there exists $j\geq 0$ such that $C^j\mathrm{G}=\{\mathrm{id}\}$. If $j$ is the minimum non-negative number with such a property we say that $\mathrm{G}$ is of \textbf{\textit{nilpotent class}} $j$. Nilpotent subgroups of the Cremona group have been described:

\begin{thm}[\cite{Deserti:nilpotent}]\label{thm:nilp}
{\sl Let $\mathrm{G}$ be a nilpotent subgroup of $\mathrm{Bir}(\mathbb{P}^2_\mathbb{C})$. Then
\smallskip
\begin{itemize}
\item[$\bullet$] either\, $\mathrm{G}$ is up to finite index metabelian, 
\smallskip
\item[$\bullet$] or\, $\mathrm{G}$ is a torsion group.
\end{itemize}}
\end{thm}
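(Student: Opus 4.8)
The plan is to use the main theorem of this paper (Theorem \ref{mainthm}) together with the structure theory of the three ``geometric'' cases it produces, and the general fact that nilpotency is inherited by subgroups and quotients and is stable under passing to finite index in the relevant sense. First I would dispose of the easy cases: if $\mathrm{G}$ is finite it is a torsion group and we are done; if $\mathrm{G}$ is virtually abelian then it is up to finite index abelian, hence metabelian, and we are done. So assume $\mathrm{G}$ is infinite and not virtually abelian. Since a nilpotent group is solvable, Theorem \ref{mainthm} applies, and up to finite index we land in one of the four cases (1)--(4).

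Next I would rule out cases (3) and (4) and analyze case (2). For case (4): the group there contains a hyperbolic monomial map $\psi$ together with the diagonal torus, and one checks that the subgroup $\langle\psi,(\alpha x,\beta y)\rangle$ cannot be nilpotent — conjugation by $\psi$ acts on the torus $\mathbb{C}^*\times\mathbb{C}^*$ through the matrix $M$ of spectral radius $>1$, so the descending central series $C^n$ never stabilizes (the iterated commutators of $\psi$ with a generic torus element generate larger and larger subgroups, because $M^n-\mathrm{id}$ has growing image). For case (3), $\mathrm{G}$ preserves an elliptic fibration and is therefore conjugate into $\mathrm{Aut}(\mathcal{S})$ for a Halphen surface $\mathcal{S}$, where it is discrete and virtually abelian — contradicting our standing assumption. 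For case (2), $\mathrm{G}$ is up to conjugacy a subgroup of $\mathrm{PGL}(2,\mathbb{C}(y))\rtimes\mathrm{PGL}(2,\mathbb{C})$ preserving the rational fibration $y=\mathrm{cst}$; projecting to the base gives a nilpotent subgroup of $\mathrm{PGL}(2,\mathbb{C})$, and a nilpotent subgroup of $\mathrm{PGL}(2,\mathbb{C})$ is, up to finite index, abelian (indeed conjugate into a torus or into the unipotent subgroup). Replacing $\mathrm{G}$ by a finite index subgroup, the image in $\mathrm{PGL}(2,\mathbb{C})$ is abelian, so $[\mathrm{G},\mathrm{G}]$ lies in the kernel, i.e.\ in $\mathrm{PGL}(2,\mathbb{C}(y))$; that kernel is abelian-by-(bounded), and in fact for a nilpotent $\mathrm{G}$ acting fiberwise one shows $[\mathrm{G},\mathrm{G}]$ is abelian, so $\mathrm{G}$ is metabelian up to finite index.

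The remaining case (1) — every element of finite order or conjugate to an automorphism of $\mathbb{P}^2_\mathbb{C}$ — is where I expect the main work. The dichotomy I am aiming at is: either $\mathrm{G}$ is a torsion group, or it has an element of infinite order, in which case I want to show it is virtually metabelian. If $\mathrm{G}$ contains an element $g$ of infinite order, then $g$ is (up to birational conjugacy) an elliptic element of infinite order, hence conjugate to an automorphism of $\mathbb{P}^2_\mathbb{C}$ of infinite order; the Zariski closure of $\langle g\rangle$ in $\mathrm{PGL}(3,\mathbb{C})$ is a positive-dimensional algebraic group. The point is then to regularize: one must produce a single model on which all of $\mathrm{G}$ acts by automorphisms. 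Here I would invoke the bounded-degree regularization theorem of Weil (stated in \S\ref{subsec:elliptic}) — the real obstacle is showing that a nilpotent, infinite, non-virtually-abelian subgroup all of whose elements are elliptic actually has bounded degree. This can fail in general (the group $\mathrm{E}$ of Examples \ref{exemples} is the cautionary example), but that group is a torsion group on which the obstruction lives, whereas in the presence of an infinite-order element one can use the algebraic subgroup it generates to bound degrees on a suitable blow-up, and then nilpotent subgroups of a linear algebraic group are virtually metabelian by Lie--Kolchin. So the structure of the argument is: reduce via Theorem \ref{mainthm} to case (1), split off the torsion case, and in the non-torsion case regularize and apply linearity; the hard step is the degree bound (equivalently, the regularization) in case (1) when an element of infinite order is present.
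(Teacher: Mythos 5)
First, a framing point: the paper does not prove Theorem \ref{thm:nilp} at all --- it is imported from \cite{Deserti:nilpotent}. What \S\ref{sec:appl} supplies is only (i) a new proof, via Theorem \ref{mainthm}, that an infinite nilpotent non virtually abelian group contains no hyperbolic map, which is exactly your case-(4) commutator computation, and (ii) a two-line reduction of cases (1)--(3) back to the proofs of \cite{Deserti:nilpotent}. So your architecture coincides with the paper's sketch; the question is whether you have filled in the parts the paper delegates to the earlier reference. You have not, and two of your steps are genuinely broken.

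The serious gap is case (1). Your mechanism is: an infinite-order element is conjugate to an automorphism of $\mathbb{P}^2_\mathbb{C}$, hence generates a positive-dimensional algebraic group, hence one can bound degrees and regularize by Weil's theorem. The group $\mathrm{E}=\big\{(\alpha x+P(y),\beta y+\gamma)\big\}$ of Examples \ref{exemples} refutes this as stated: contrary to what you write, $\mathrm{E}$ is \emph{not} a torsion group (it contains $(x+1,y)$ and $(2x,y)$), every element of $\mathrm{E}$ is elliptic, and yet $\mathrm{E}$ has unbounded degree and is not regularizable --- the Remark following Examples \ref{exemples} makes exactly this point. So \og infinite-order element $\Rightarrow$ bounded degree\fg\ is false for solvable groups of elliptic elements, and you give no argument that nilpotency changes the picture; that is precisely the content of \cite{Deserti:nilpotent} that cannot be waved away. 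A second, lesser gap is in case (2): $\mathrm{PGL}(2,\mathbb{C}(y))$ is not \og abelian-by-bounded\fg, and the assertion that $[\mathrm{G},\mathrm{G}]$ is abelian is the whole point rather than a remark --- a nilpotent subgroup of $\mathrm{PGL}(2,K)$ is only \emph{virtually} abelian, and since forming commutator subgroups does not commute with passing to finite-index subgroups, extracting \og metabelian up to finite index\fg\ for $\mathrm{G}$ needs an actual argument. (One small correction worth keeping in your case-(4) computation: the iterated commutators land in the image of $(M-\mathrm{id})^n$, not of $M^n-\mathrm{id}$, and their infiniteness follows because $1$ is not an eigenvalue of $M$, the eigenvalues being $\lambda$ and $\pm\lambda^{-1}$ with $\lambda>1$.)
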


We find an alternative proof of \cite[Lemma 4.2]{Deserti:nilpotent} for $\mathrm{G}$ infinite:

\begin{lem}
{\sl Let $\mathrm{G}$ be an infinite, nilpotent, non virtually abelian subgroup of $\mathrm{Bir}(\mathbb{P}^2_\mathbb{C})$. Then $\mathrm{G}$ does not contain a hyperbolic map.}
\end{lem}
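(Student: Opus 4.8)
The plan is to combine Proposition~\ref{pro:1hyp} with the structure of the toric group $\mathrm{G}_{\textrm{toric}}$, then show that the nilpotency hypothesis is incompatible with the presence of a hyperbolic monomial map sitting inside such a group. Suppose, for contradiction, that the infinite nilpotent non virtually abelian group $\mathrm{G}\subset\mathrm{Bir}(\mathbb{P}^2_\mathbb{C})$ contains a hyperbolic map $\phi$. A nilpotent group is solvable, so Proposition~\ref{pro:1hyp} applies: up to conjugacy and finite index, $\mathrm{G}$ is contained in the group $\mathrm{G}_M$ generated by a hyperbolic monomial map $m_M=(x^py^q,x^ry^s)$ (with $M=\left[\begin{smallmatrix} p & q\\ r & s\end{smallmatrix}\right]\in\mathrm{GL}(2,\mathbb{Z})$ of spectral radius $>1$) and the diagonal torus $\mathrm{T}=\{(\alpha x,\beta y)\}$. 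Passing to a finite-index subgroup does not affect nilpotency, so I may and will assume $\mathrm{G}\subset\mathrm{G}_M$ outright, with $\mathrm{G}$ still infinite and non virtually abelian.

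Next I would analyze the semidirect product structure $\mathrm{G}_M=\mathrm{T}\rtimes\langle m_M\rangle$, where $m_M$ acts on $\mathrm{T}\cong(\mathbb{C}^*)^2$ via the matrix $M$. Project $\mathrm{G}$ onto the quotient $\langle m_M\rangle\cong\mathbb{Z}$. If this projection is trivial, then $\mathrm{G}\subset\mathrm{T}$ is abelian, contradicting the hypothesis; since $\mathrm{G}$ contains the hyperbolic map $\phi$, whose image in $\mathbb{Z}$ must be nonzero (a diagonal automorphism is elliptic), the projection is onto a nontrivial subgroup $k\mathbb{Z}$. Thus $\mathrm{G}$ contains an element of the form $g=t_0\cdot m_M^{k}$ with $t_0\in\mathrm{T}$ and $k\neq 0$, and $\mathrm{G}\cap\mathrm{T}$ is a nontrivial (indeed infinite, else $\mathrm{G}$ would be virtually cyclic hence virtually abelian) normal subgroup $A$ of $\mathrm{G}$ on which $g$ acts, up to the inner twist by $t_0$, through $M^{k}$. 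The key point is that $M^{k}$ is hyperbolic: its eigenvalues $\mu,\mu^{-1}$ satisfy $|\mu|>1$, so $M^{k}-\mathrm{Id}$ is invertible over $\mathbb{Q}$ and the induced automorphism of $\mathrm{T}\cong(\mathbb{C}^*)^2$ has no nontrivial fixed subtorus acting trivially.

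Finally I would derive the contradiction from nilpotency. In a nilpotent group the descending central series terminates; but iterating the commutator with $g$ on a nontrivial element $a\in A$ produces $[g,a]=g a g^{-1} a^{-1}=(M^{k}-\mathrm{Id})\cdot a$ in additive notation on $A\subset(\mathbb{C}^*)^2$ (writing the $\mathrm{T}$-part multiplicatively but the action additively on the character lattice / using that the conjugation by $t_0$ is trivial on $\mathrm{T}$), and since $M^{k}-\mathrm{Id}$ is invertible with an eigenvalue of modulus $\neq 1$, the sequence $C^n\mathrm{G}\cap A\supseteq (M^{k}-\mathrm{Id})^n A$ never becomes trivial: indeed $(M^{k}-\mathrm{Id})$ is an injective endomorphism of the finitely generated-or-divisible group $A$, so $(M^{k}-\mathrm{Id})^n A\neq\{1\}$ for all $n$. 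Hence $C^n\mathrm{G}\neq\{\mathrm{id}\}$ for every $n$, contradicting nilpotency. The main obstacle I anticipate is bookkeeping the action carefully: one must check that conjugation by the torus part $t_0$ really is trivial on $\mathrm{T}$ (it is, since $\mathrm{T}$ is abelian) so that the action on $A$ is genuinely by $M^{k}$ and not some affine perturbation, and one must handle the case where $A$ is not finitely generated (a subgroup of $(\mathbb{C}^*)^2$) by invoking injectivity of $M^{k}-\mathrm{Id}$ on all of $(\mathbb{C}^*)^2$ rather than a rank argument. Alternatively, and perhaps more cleanly, one can quote Theorem~\ref{thm:nilp}: a nilpotent subgroup is either virtually metabelian or a torsion group; a torsion group contains no hyperbolic map (hyperbolic maps have infinite order), and if $\mathrm{G}$ is virtually metabelian and contains a hyperbolic map one runs the same eigenvalue argument inside the metabelian finite-index subgroup to contradict the non-virtually-abelian assumption — but the direct descending-central-series computation above is the most self-contained route.
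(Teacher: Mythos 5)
Your proposal is correct and follows essentially the same route as the paper: reduce via Proposition~\ref{pro:1hyp} to the toric group $\mathrm{T}\rtimes\langle m_M\rangle$ and show that the descending central series never terminates because iterated commutators with the hyperbolic element act on the torus part through powers of $M^k-\mathrm{Id}$ (the paper writes this out coordinatewise with $k=1$ on a generating set $\Upsilon$ of diagonal maps). The only slip is the claim that $M^k-\mathrm{Id}$ is injective on $(\mathbb{C}^*)^2$: it is only finite-to-one, with kernel of order $\vert\det(M^k-\mathrm{Id})\vert$, but since $A$ is infinite its images under $(M^k-\mathrm{Id})^n$ remain infinite, which is exactly the infinitude argument the paper relies on, so the conclusion is unaffected.
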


\begin{proof}
The group $\mathrm{G}$ is also solvable. Assume by contradiction that $\mathrm{G}$ contains a hyperbolic map; then according to Theorem \ref{mainthm} up to birational conjugacy and finite index there exists $\Upsilon\subset\mathbb{C}^*\times\mathbb{C}^*$ infinite such that $\mathrm{G}$ is generated by $\phi=(x^py^q,x^ry^s)$ and
\[
\big\{(\alpha x,\beta y)\,\vert\,(\alpha,\,\beta)\in\Upsilon\big\}.
\]
The group $C^1\mathrm{G}$ contains 
\[
\big\{[\phi,(\alpha x,\beta y)]\,\vert\,(\alpha,\beta)\in\Upsilon\big\}=\big\{(\alpha^{p-1}\beta^qx,\alpha^r\beta^{s-1}y)\,\vert\,(\alpha,\beta)\in\Upsilon\big\}
\]
that is infinite since $\Upsilon$ is infinite. Suppose that $C^i\mathrm{G}$ contains the infinite set
\[
\big\{(\alpha^{\ell_i}\beta^{n_i}x,\alpha^{k_i}\beta^{m_i}y)\,\vert\,(\alpha,\beta)\in\Upsilon\big\}
\]
($\ell_i$, $n_i$, $k_i$ and $m_i$ are some functions in $p$, $q$, $r$ and $s$); then $C^{i+1}\mathrm{G}$ contains 
\[
\big\{(\alpha^{(p-1)\ell_i+qm_i}\beta^{(p-1)k_i+qn_i}x,\alpha^{r\ell_i+qm_i}\beta^{rk_i+(s-1)n_i}y)\,\vert\,(\alpha,\beta)\in\Upsilon\big\}
\]
that is still infinite.
\end{proof}

So any nilpotent and infinite subgroup of $\mathrm{Bir}(\mathbb{P}^2_\mathbb{C})$ falls in case 1., 2., 3. of Theorem \ref{mainthm}. If it falls in case 2. or 3. then $\mathrm{G}$ is virtually metabelian (\cite[Proof of Theorem 1.1.]{Deserti:nilpotent}). Finally if $\mathrm{G}$ falls in case 1., we can prove as in \cite{Deserti:nilpotent} that either  $\mathrm{G}$ is a torsion group, or $\mathrm{G}$ is virtually metabelian.

\subsection{Soluble length of a nilpotent subgroup of $\mathrm{Bir}(\mathbb{P}^2_\mathbb{C})$}

Let us recall the following statement due to Epstein and Thurston (\cite{EpsteinThurston}): let $M$ be a connected complex manifold. Let $\mathfrak{h}$ be a nilpotent Lie subalgebra of the complex vector space of rational vector fields on $M$. Then $\mathfrak{h}^{(n)}=\{0\}$ if $n\geq\dim M$; hence the solvable length of $\mathfrak{h}$ is bounded by the dimension of $M$. We have a similar statement in the context of birational maps; indeed a direct consequence of Theorem \ref{thm:nilp} is the following property: let $\mathrm{G}\subset\mathrm{Bir}(\mathbb{P}^2_\mathbb{C})$ be a nilpotent subgroup of $\mathrm{Bir}(\mathbb{P}^2_\mathbb{C})$ that is not a torsion group, then the soluble length of $\mathrm{G}$ is bounded by the dimension of $\mathbb{P}^2_\mathbb{C}$.

\subsection{Favre's question}\label{sec:favre}

In \cite{Favre} Favre asked few questions; among them there is the following: does any solvable, finitely generated subgroup $\mathrm{G}$ of $\mathrm{Bir}(\mathbb{P}^2_\mathbb{C})$ contain a subgroup $\mathrm{H}$ of finite index such that $[\mathrm{H},\mathrm{H}]$ is nilpotent ? We will prove that the answer is no if $[\mathrm{G},\mathrm{G}]$ is not a torsion group.

Take $\mathrm{G}$ a solvable and finitely generated subgroup of the Cremona group; besides suppose that $[\mathrm{G},\mathrm{G}]$ is not a torsion group. Assume that the answer of Favre's question is yes. Up to finite index one can assume that $[\mathrm{G},\mathrm{G}]$ is nilpotent. According to Theorem \ref{thm:nilp} the group $\mathrm{G}^{(1)}=[\mathrm{G},\mathrm{G}]$ is up to finite index metabelian; in other words up to finite index $\mathrm{G}^{(2)}=[\mathrm{G}^{(1)},\mathrm{G}^{(1)}]$ is abelian and so $\mathrm{G}^{(3)}=[\mathrm{G}^{(2)},\mathrm{G}^{(2)}]=\big\{\mathrm{id}\big\}$, {\it i.e.} the soluble length of $\mathrm{G}$ is bounded by $3$ up to finite index. Consider the subgroup
\[
\langle(x+y^2,y),\,(x(1+y),y),\,\left(x,\frac{y}{1+y}\right),\,(x,2y)\rangle
\]
of $\mathrm{Bir}(\mathbb{P}^2_\mathbb{C})$. It is solvable of length $4$ (\emph{see} \cite{MarteloRibon}): contradiction. 

\subsection{Baumslag-Solitar groups}

For any integers $m$, $n$ such that $mn\not= 0$, the Baumslag-Solitar group $\mathrm{BS}(m;n)$ is defined by the following presentation
\[
\mathrm{BS}(m;n)=\langle r,\,s\,\vert\,rs^mr^{-1}=s^n \rangle
\]

In \cite{BlancDeserti:degreegrowth} we prove that there is no embedding of $\mathrm{BS}(m;n)$ into $\mathrm{Bir}(\mathbb{P}^2_\mathbb{C})$ as soon as $\vert n\vert$, $\vert m\vert$, and $1$ are distinct; it corresponds exactly to the case $\mathrm{BS}(m;n)$ is not solvable. Indeed $\mathrm{BS}(m;n)$ is solvable if and only if $\vert m\vert = 1$ or $\vert n\vert = 1$ (\emph{see} \cite[Proposition A.6]{Souche}). 

\begin{pro}
{\sl Let $\rho$ be an embedding of $\mathrm{BS}(1;n)=\langle r,\,s\,\vert\,rsr^{-1}=s^n \rangle$, with $n\not=1$, into the Cremona group. Then
\smallskip
\begin{itemize}
\item[$\bullet$] the image of $\rho$ doesn't contain a hyperbolic map,
\smallskip
\item[$\bullet$] and
\[
\rho(s)=\big(x,y+1\big)\qquad \rho(r)=\big(\nu(x),n(y+a(x))\big)
\]
with $\nu\in\mathrm{PGL}(2,\mathbb{C})$ and $a\in\mathbb{C}(x)$. 
\end{itemize}}
\end{pro}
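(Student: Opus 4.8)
The plan is to analyze the image $\rho(\mathrm{BS}(1;n))$ inside $\mathrm{Bir}(\mathbb{P}^2_\mathbb{C})$ using the Picard--Manin machinery and the structure theorem. First I would show that $\rho(s)$ and $\rho(r)$ cannot be hyperbolic. The element $s$ is conjugate in $\mathrm{BS}(1;n)$ to $s^n$ (via $r$), so $\lambda(\rho(s))=\lambda(\rho(s^n))=\lambda(\rho(s))^n$ by the birational invariance of the dynamical degree under conjugacy and its behaviour under powers; since $|n|\neq 1$ this forces $\lambda(\rho(s))=1$, so $\rho(s)$ is not hyperbolic. For $\rho(r)$, suppose it is hyperbolic; then the subgroup $\langle\rho(r),\rho(s)\rangle=\rho(\mathrm{BS}(1;n))$ is solvable, infinite, non virtually abelian (solvable Baumslag--Solitar groups with $n\neq 1$ are not virtually abelian, e.g. they contain no finite-index abelian subgroup because the derived subgroup is a non-finitely-generated abelian group on which $r$ acts non-trivially), and contains a hyperbolic map, so Proposition~\ref{pro:1hyp} applies: up to conjugacy and finite index it lies in the toric group generated by one hyperbolic monomial map and the diagonal torus. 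But then $\rho(s)$ would be conjugate to a diagonal automorphism $(\alpha x,\beta y)$, and the relation $rsr^{-1}=s^n$ together with the monomial action would force $(\alpha,\beta)$ to satisfy $M\cdot(\log\alpha,\log\beta)=n\cdot(\log\alpha,\log\beta)$, i.e. $(\log\alpha,\log\beta)$ is an eigenvector of the hyperbolic matrix $M$ with eigenvalue $n$. Chasing this through, together with the requirement that $\rho$ be injective (so $\rho(s)$ has infinite order and $\langle\rho(s)\rangle$ is normalized appropriately), yields a contradiction; alternatively, and more cleanly, one can observe that $\mathrm{BS}(1;n)$ with $|n|\geq 2$ has exponential growth of a non-hyperbolic flavour incompatible with being inside $\mathrm{G}_{\mathrm{toric}}$ with a single hyperbolic generator. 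So $\rho(r)$ is not hyperbolic either, which gives the first bullet.

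Next I would determine the structure. Since the image contains no hyperbolic map but $\rho(r)$ has a conjugation relation, I distinguish whether the image contains a twist. If $\rho(r)$ (or $\rho(s)$, or some product) is a twist, then by the Proposition in \S\ref{subsec:parabolic} the whole group $\rho(\mathrm{BS}(1;n))$ preserves a unique fibration, rational or elliptic. The elliptic case is excluded here: a group preserving an elliptic fibration is virtually abelian (being discrete and preserving the intersection form and the fibre class, as explained after that Proposition), whereas $\rho(\mathrm{BS}(1;n))$ is not virtually abelian. So the invariant fibration is rational, and up to birational conjugacy we may assume it is $y=\mathrm{cst}$, so $\rho(\mathrm{BS}(1;n))\subset\mathrm{PGL}(2,\mathbb{C}(y))\rtimes\mathrm{PGL}(2,\mathbb{C})$. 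If instead the image contains only elliptic maps, I would argue it still preserves a fibration: $\rho(s)$ generates (together with its conjugates $\rho(s^{n^k})$) a subgroup which, after noting it is infinite and abelian with unbounded behaviour, must preserve a fibration, and the normalizing action of $\rho(r)$ forces that fibration to be $\rho(r)$-invariant too. In all surviving cases $\rho(s)$ fixes each fibre of the rational fibration $y=\mathrm{cst}$ fibrewise (it cannot act on the base by an infinite-order Möbius map and still be conjugate under $\rho(r)$ to its own $n$-th power in a way compatible with infinite order — one checks the base action of $s$ must be trivial), so $\rho(s)=(\mu(x,y),y)$ acting on a general fibre; being elliptic of infinite order on a fibre, after a further fibrewise birational change of coordinate it becomes a translation $\rho(s)=(x,y+1)$ — here I would invoke that an infinite-order element of $\mathrm{PGL}(2,\mathbb{C}(y))$ acting on the generic fibre with $\lambda=1$ and forming the required normal subgroup is conjugate, over $\mathbb{C}(x)$ with roles of $x,y$ swapped to match the stated normalization $\rho(s)=(x,y+1)$, to $y\mapsto y+1$.

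Finally, with $\rho(s)=(x,y+1)$ fixed, I would compute the centralizer-normalizer condition imposed by $rsr^{-1}=s^n$. Write $\rho(r)=(\nu(x),\beta(x)y+b(x))$ for some $\nu\in\mathrm{PGL}(2,\mathbb{C})$, $\beta\in\mathbb{C}(x)^*$, $b\in\mathbb{C}(x)$: the fact that $\rho(r)$ preserves the fibration $x=\mathrm{cst}$ and normalizes the group of vertical translations forces the $y$-component to be affine in $y$. Then $\rho(r)\rho(s)\rho(r)^{-1}$ sends $y\mapsto y+\beta(x)$ acting vertically (after chasing $\nu$ through), so the relation $\rho(r)\rho(s)\rho(r)^{-1}=\rho(s)^n=(x,y+n)$ gives $\beta(x)\equiv n$, i.e. $\rho(r)=(\nu(x),n\,y+b(x))=(\nu(x),n(y+a(x)))$ with $a=b/n\in\mathbb{C}(x)$, which is exactly the asserted normal form. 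The main obstacle I expect is the elliptic-only case: ruling out exotic elliptic subgroups that a priori might not preserve any fibration (cf. Wright's examples mentioned in the introduction) requires using the $\mathrm{BS}(1;n)$ relation essentially — the conjugate-to-a-proper-power relation, combined with infinite order, is what pins the group down to the fibred picture, and making that step rigorous rather than hand-wavy is the delicate part.
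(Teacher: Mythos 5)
Your overall plan is reasonable, but it diverges from the paper's proof in a way that exposes real gaps. The paper does \emph{not} rederive the normal form: it quotes \cite[Proposition 6.2, Lemma 6.3]{BlancDeserti:degreegrowth} to get $\rho(s)=(x,y+1)$ and $\rho(r)=(\nu(x),n(y+a(x)))$ outright, and only \emph{then} deduces the first bullet: if the image contained a hyperbolic map, Proposition \ref{pro:1hyp} would place it (up to conjugacy and finite index) in the group generated by one hyperbolic monomial map and the diagonal torus, where every element is either diagonal or hyperbolic monomial; but $\rho(s)^k=(x,y+k)$ is conjugate to neither (by \cite{Blanc:conjugacyclasses} for the diagonal case, and because it is elliptic for the hyperbolic case). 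Your proposal inverts this order and therefore has to prove the hard part from scratch. The central gap is exactly the one you flag yourself: in the all-elliptic case you assert that $\rho(s)$ and its conjugates force an invariant fibration, but Theorem \ref{mainthm} explicitly allows elliptic-only solvable groups with no invariant fibration (case 1, cf.\ Wright's examples), so nothing in this paper supplies that step; it is precisely the content of the external Blanc--D\'eserti result (which proceeds by degree-growth estimates on $\rho(s)^{n^k}=\rho(r)^k\rho(s)\rho(r)^{-k}$), and without it your derivation of the second bullet does not close. Likewise, your claim that the base action of $s$ must be trivial is asserted with ``one checks'': note that a parabolic element of $\mathrm{PGL}(2,\mathbb{C})$ \emph{is} conjugate to its $n$-th power, so this needs an actual argument.

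Two further problems. First, the contradiction when $\rho(r)$ is hyperbolic is not actually derived: ``chasing this through yields a contradiction'' and the appeal to ``exponential growth of a non-hyperbolic flavour'' are not proofs. The eigenvector route can be made to work (if $(\alpha,\beta)$ satisfied $M\cdot(\alpha,\beta)=(\alpha^n,\beta^n)$ with $M\in\mathrm{GL}(2,\mathbb{Z})$ of determinant $\pm1$ and $|n|\geq 2$, multiplicative independence considerations plus the integrality of $\operatorname{tr}M$ force $\rho(s)$ to have finite order), but you must also handle the ``up to finite index'' in Proposition \ref{pro:1hyp}: only a finite-index subgroup lands in $\mathrm{G}_{\mathrm{toric}}$, so you should work with $\rho(s)^k$ and $\rho(r)^m$, which still satisfy a Baumslag--Solitar relation with exponent $n^m$. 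Second, your justification that $\mathrm{BS}(1;n)$ is not virtually abelian is false for $n=-1$: $\mathrm{BS}(1;-1)$ is the Klein bottle group, which contains $\mathbb{Z}^2$ with index $2$ and whose derived subgroup is $\mathbb{Z}$, so Proposition \ref{pro:1hyp} does not apply to it as you invoke it. (Indeed the statement really concerns $|n|\geq 2$; for $n=-1$ one can embed $\mathrm{BS}(1;-1)$ with $\rho(s)=(\alpha x,y)$ diagonal of infinite order and $\rho(r)=(1/x,y+1)$, which is not of the asserted normal form, so this case must be excluded or treated separately.)
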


\begin{proof}
According to \cite[Proposition 6.2, Lemma 6.3]{BlancDeserti:degreegrowth} one gets that $\rho(s)=(x,y+1)$ and $\rho(r)=\big(\nu(x),n(y+a(x))\big)$ for some $\nu\in\mathrm{PGL}(2,\mathbb{C})$ and $a\in\mathbb{C}(x)$. 

Furthermore $\rho(s)$ can neither be conjugate to an automorphism of the form $(\alpha x,\beta y)$ (\emph{see} \cite{Blanc:conjugacyclasses}), nor to a hyperbolic birational map of the form $(\gamma x^py^q,\delta x^ry^s)$ with $\left[\begin{array}{cc} p & q \\ r & s\end{array}\right]\in\mathrm{GL}(2,\mathbb{Z})$ of spectral radius $>1$. As a consequence Proposition \ref{pro:1hyp} implies that $\rho(\mathrm{BS}(1;n))$ does not contain a hyperbolic birational map.
\end{proof}

\vspace{8mm}

\bibliographystyle{plain}
\bibliography{biblio}
\nocite{}

\end{document}